\tikzset{every mark/.append style={scale=0.1}}
\newtheoremstyle{neu_thm}
{13pt}       
{8pt}      
{\itshape}  
{}          
{\bfseries} 
{.}         
{.5em}      
{}          
\newtheoremstyle{neu_defn}
{13pt}       
{8pt}      
{}  
{}          
{\bfseries} 
{.}         
{.5em}      
{}          
\theoremstyle{neu_thm}
\newtheorem{thm}{Theorem}[section]
\newtheorem{cor}[thm]{Corollary}
\newtheorem{lem}[thm]{Lemma}
\newtheorem{prop}[thm]{Proposition}
\theoremstyle{neu_defn}
\newtheorem{rem}[thm]{Remark}
\newtheorem{ex}[thm]{Example}
\numberwithin{equation}{section}
\newcounter{counter_a}
\newenvironment{myenum}{\begin{list}{\textup{(\roman{counter_a})}}%
{\usecounter{counter_a}
\setlength{\itemsep}{0.5ex}\setlength{\topsep}{0.7ex}
\setlength{\leftmargin}{5ex}\setlength{\labelwidth}{5ex}}}{\end{list}}
\newcounter{counter_b}
\newenvironment{myenuma}{\begin{list}{{\rm(\alph{counter_b})}}%
{\usecounter{counter_b}
\setlength{\itemsep}{0.5ex}\setlength{\topsep}{0.7ex}
\setlength{\leftmargin}{5ex}\setlength{\labelwidth}{5ex}}}{\end{list}}
\newcounter{counter_d}
\newenvironment{myenumarabic}{\begin{list}{\textup{(\arabic{counter_d})}}%
{\usecounter{counter_d}
\setlength{\itemsep}{0.5ex}\setlength{\topsep}{0.7ex}
\setlength{\leftmargin}{5ex}\setlength{\labelwidth}{5ex}}}{\end{list}}
\titleformat{\section}{\normalfont\bfseries\centering}{\thesection.}{.25em}{}
\titleformat{\subsection}{\normalfont\bfseries}{\thesubsection.}{.25em}{}
\def\moverlay{\mathpalette\mov@rlay}
\def\mov@rlay#1#2{\leavevmode\vtop{%
   \baselineskip\z@skip \lineskiplimit-\maxdimen
   \ialign{\hfil$\m@th#1##$\hfil\cr#2\crcr}}}
\newcommand{\charfusion}[3][\mathord]{
    #1{\ifx#1\mathop\vphantom{#2}\fi
        \mathpalette\mov@rlay{#2\cr#3}
      }
    \ifx#1\mathop\expandafter\displaylimits\fi}
\DeclareMathOperator{\dist}{dist}
\newcommand{\sigmap}{\sigma_{\textup{p}}}
\newcommand{\sigmaess}{\sigma_{\textup{ess}}}
\newcommand{\rd}{\mathrm{d}}
\newcommand{\eps}{\varepsilon}
\newcommand{\defeq}{\mathrel{\mathop:}=}
      \def\dC{{\mathbb C}}
   \def\dN{{\mathbb N}}   
      \def\dR{{\mathbb R}}
      \def\cF{{\mathcal F}}
   \def\cH{{\mathcal H}}   
   \def\cK{{\mathcal K}}   
\def\cM{{\mathcal M}}      
\def\cS{{\mathcal S}}   \def\cT{{\mathcal T}}
\DeclareMathOperator\diag{diag}
 \def\ker{{\xker\,}}
\newcommand{\Pluspunkt}{\stackrel{.}{+}}
\newcommand{\Skdef}{(\raisebox{0.5 ex}{.},\raisebox{0.5 ex}{.})}
\newcommand{\Skindef}{[\raisebox{0.5 ex}{.},\raisebox{0.5 ex}{.}]}
\newcommand{\mymatrix}[1]{\left[\;\begin{matrix} #1 \end{matrix}\;\right]}
\newcommand{\ra}{\rightarrow}
\newcommand{\K}[2]{\left[ #1, #2\right]}
\newcommand{\ort}{[\perp]}
\DeclareMathOperator{\linspan}{span}
\newcommand\void[1]{}
\definecolor{darkgreen}{rgb}{0.,0.6,0.}
\definecolor{enclcol}{rgb}{1.,0.5,0.2}     
\definecolor{mygreen}{rgb}{0.,0.6,0.}      
\definecolor{myorange}{rgb}{1.,0.6,0.2}    
\definecolor{mygrey}{rgb}{0.26,0.26,0.26}  
\definecolor{myblue}{rgb}{0.,0.6,1.}       
\definecolor{myred}{rgb}{0.86,0.078,0.23}  
\newcommand{\ol}{\overline}
\newcommand{\ds}{\dotplus}
\newcommand{\wt}{\widetilde}
\renewcommand{\emptyset}{\varnothing}
\newcommand{\rmref}[1]{{\rm\ref{#1}}}
\newcommand{\braces}[1]{{\rm (}#1{\rm )}}
\newcommand{\R}{\ensuremath{\mathbb R}}    
\newcommand{\C}{\ensuremath{\mathbb C}}    
\newcommand{\N}{\ensuremath{\mathbb N}}    
\newcommand{\gperp}{{[\perp]}}
\newcommand{\product}{[\cdot\,,\cdot]}
\newcommand{\calB}{\mathcal B}
\newcommand{\calH}{\mathcal H}
\newcommand{\calK}{\mathcal K}         
\newcommand{\calL}{\mathcal L}         
\newcommand{\calM}{\mathcal M}
\newcommand{\la}{\lambda}
\newcommand{\bmat}[4]
{
   \mymatrix{#1 & #2\\#3 & #4}
}
\newcommand{\smallmat}[4]{\left(\begin{smallmatrix}#1 & #2\\#3 & #4\end{smallmatrix}\right)}
\renewcommand{\Im}{\operatorname{Im}\,}
\renewcommand{\Re}{\operatorname{Re}\,}
\newcommand{\RE}{\Re}
\renewcommand{\ker}{\operatorname{ker}\,}
\newcommand{\ran}{\operatorname{ran}\,}
\newcommand{\dom}{\operatorname{dom}\,}
\newcommand{\codim}{\operatorname{codim}\,}
\newcommand{\rank}{\operatorname{rank}\,}
\newcommand{\Sra}{\Rightarrow}
\definecolor{darkgreen}{rgb}{0,0.6,0.1}
\newcommand{\red}[1]{{\color{red}{#1}}}
\begin{document}

\begin{frontmatter}

\title{Spectral enclosures for a class of block operator matrices}

\author[IAM,FI]{Juan Giribet}\ead{jgiribet@fi.uba.ar}
\author[US]{Matthias Langer}\ead{m.langer@strath.ac.uk}
\author[IAM,LP]{Francisco Mart\'{\i}nez Per\'{\i}a \corref{fmp}}\ead{francisco@mate.unlp.edu.ar}

\author[KU]{Friedrich Philipp}\ead{fmphilipp@gmail.com}
\author[IAM,TU]{Carsten Trunk}\ead{carsten.trunk@tu-ilmenau.de}

\address[IAM]{Instituto Argentino de Matem\'{a}tica ``Alberto P. Calder\'{o}n'' (CONICET), Saavedra 15 (1083) Buenos Aires, Argentina}
\address[FI]{Departamento de Matem\'atica -- FI-Universidad de Buenos Aires, Paseo Col\'on 850 (1063) Buenos Aires, Argentina}

\address[US]{Department of Mathematics and Statistics, University of Strathclyde, 26 Richmond Street, Glasgow G1 1XH, United Kingdom \\ \url{personal.strath.ac.uk/m.langer}}

\address[LP]{
Centro de Matem\'atica de La Plata -- FCE, Universidad Nacional de La Plata, C.C.\ 172, (1900) La Plata, Argentina}

\address[KU]{Katholische Universit\"{a}t Eichst\"{a}tt-Ingolstadt,
 Ostenstra\ss e 26,
85072 Eich\-st\"{a}tt, Germany \\ \url{www.ku.de/?fmphilipp}}

\address[TU]{Institut f\"ur  Mathematik, Technische Universit\"{a}t Ilmenau, Postfach 100565, D-98684 Ilmenau, Germany \\ \url{www.tu-ilmenau.de/de/analysis/team/carsten-trunk}}

\cortext[fmp]{Corresponding author}

\begin{keyword}
Block operator matrices\sep quadratic numerical range\sep spectral enclosure\sep Gershgorin's circle
\end{keyword}%



\begin{abstract}
We prove new spectral enclosures for the non-real spectrum of
a class of $2\times2$ block operator matrices
with self-adjoint operators $A$ and $D$ on the diagonal
and operators $B$ and $-B^*$ as off-diagonal entries.
One of our main results resembles Gershgorin's circle theorem.
The enclosures are applied to $J$-frame operators.
\end{abstract}

\end{frontmatter}


\section{Introduction}

\noindent
We consider block operator matrices $S$ acting in
the orthogonal sum $\calH \defeq \calH_+ \oplus \calH_-$ of two Hilbert spaces,
\begin{equation} \label{PlazaMalvinas}
  S = \mymatrix{ A & B \\ -B^* & D },
\end{equation}
where $A$ and $D$ are (possibly unbounded) self-adjoint operators in $\calH_+$
and $\calH_-$, respectively, and $B$ is a bounded operator from $\calH_-$ to $\calH_+$.

Such operators play an important role in various applications. For instance,
they appear in the study of so-called floating singularities
\cite{Bog85,JT02,JT07,LLT02,LMM90}, in the perturbation theory for equations
of indefinite Sturm--Liouville type \cite{BPT13}, and also
in frame theory \cite{GLLMMT,GMMM12}.

Clearly, $S$ is not self-adjoint in $\calH$ unless $B=0$.
However, it is self-adjoint if we introduce the indefinite inner product
\begin{equation}\label{indefIP}
  \left[\mymatrix{ x_+ \\ x_- }, \mymatrix{ y_+ \\ y_- }\right]
  = (x_+,y_+) - (x_-,y_-), \qquad
  \mymatrix{ x_+ \\ x_- }, \mymatrix{ y_+ \\ y_- } \in \calH;
\end{equation}
for bounded $S$ this means that $[Sx,y]=[x,Sy]$ for all $x,y\in\calH$.
The indefinite inner product $[\cdot,\cdot]$ turns $\calH$ into a Krein space,
i.e.\ it is the orthogonal sum of a Hilbert space and an anti-Hilbert space.
Actually, every bounded self-adjoint operator in a Krein space can be written in
the form \eqref{PlazaMalvinas}.

Of particular interest is the location of the spectrum of $S$. In \cite{LLMT05,LS17,T09}
spectral enclosures were obtained via the quadratic numerical range,
and in \cite{AMT10,BPT13} in terms of the spectra of $A$ and $D$.
Gershgorin-type results for more general operator matrices were presented in \cite{DMS15}
and \cite{Salas99}.
Moreover, in \cite{AL95,JT02,LMM97,LMM90} the essential spectrum was investigated,
and in \cite{LLT02} variational principles and estimates for eigenvalues were proved.
Invariant subspaces and factorizations of Schur complements were considered
in \cite{MS96} and \cite{AM16},
and in \cite{AMS09} conditions were presented for an operator of the form \eqref{PlazaMalvinas}
to be similar to a self-adjoint operator in a Hilbert space.
For an overview we refer to the monograph~\cite{Tretter08}.


In general, the spectrum of block operator matrices as in \eqref{PlazaMalvinas}
is not contained in the real line.  The self-adjointness of $S$ in the Krein space
with the inner product \eqref{indefIP} implies only that the spectrum of $S$
is symmetric with respect to the real axis.
The aim of this paper is to prove enclosures for the (non-real) spectrum of $S$
in terms of (spectral) quantities of the operators $A$, $B$, and $D$.

We start with a general enclosure for the (closure) of the quadratic numerical range of $S$, formulated in terms of the numerical ranges
of $A$ and $D$ and the norm of $B$; see Proposition~\ref{pr:spec_incl} below. The quadratic numerical range of a block operator matrix was introduced in \cite{LT98} and its closure contains the spectrum of $S$; see \eqref{W2specincl}. Although similar enclosures for the spectrum of $S$ were already known, one of the advantages of having a spectral enclosure for the quadratic numerical range is that it leads also to estimates of the norm of the resolvent; see the discussion in Remark \ref{re:encl_lit}.
Moreover, Proposition~\ref{pr:spec_incl} is sharp in the sense that the enclosures for the quadratic numerical range
cannot be improved if just the numerical ranges of $A$ and $D$ and the norm
of $B$ are known; see Theorem~\ref{th:encl_sharp}.

The main contribution of this paper is a spectral enclosure for the operator matrix $S$,
which is connected with the Schur complements. It is well known and follows
from a relatively simple Neumann series type argument applied to the first and second Schur complement  that
\begin{equation}\label{toalla}
  \sigma(S)\setminus\R\,\subseteq\,\big\{\la\in\C\setminus\R :
  \|B^*(A-\la)^{-1}B(D-\la)^{-1}\| \ge 1 \mbox{ and }
  \|B(D-\la)^{-1}B^*(A-\la)^{-1}\|
  \ge 1\big\};
\end{equation}
see \cite[Theorem~1.1]{DMS15}, \cite[Lemma~5.2\,(ii)]{AMT10}
or \cite[Section 2.3]{Tretter08}.
Here we prove that
\begin{equation}\label{encl_intro}
  \sigma(S)\setminus\R\,\subseteq\,\big\{\la\in\C\setminus\R : \|(A-\la)^{-1}B\|\ge 1\quad\text{and}\quad\|(D-\la)^{-1}B^*\|\ge 1\big\};
\end{equation}
see Theorem \ref{t:new} below. The enclosures \eqref{toalla} and \eqref{encl_intro}
are independent of each other in the sense that none of the sets in the right-hand sides of \eqref{toalla} and \eqref{encl_intro} is strictly contained in the other one. However, since the norm inequalities in \eqref{encl_intro} deal separately with the resolvent functions of $A$ and $D$ it is easy to construct examples where the enclosure in \eqref{encl_intro} is strictly contained in the one in \eqref{toalla}, see Example \ref{ex:gershgorin}.

Note that the spectral enclosures in \eqref{toalla} and \eqref{encl_intro} are not explicitly formulated in terms of the spectra of $A$ and $D$.
 However, it is one of our main observations that \eqref{encl_intro}
 allows a reformulation in a more geometric manner. In particular,
given $\la\in\dC\setminus\dR$, $\|(A-\la)^{-1}B\|\ge 1$ if and only if
 for all positive continuous functions $f:\dR\ra\dR$
with some specific behaviour at infinity
we have
\[
  \la\in\bigcup_{t\in\sigma_B(A)}\mathcal{B}_{f(t)^{-1}\|f(A)B\|}(t),
\]
where $\sigma_B(A)$ is a specific closed subset of $\sigma(A)$
and $\mathcal{B}_r(t)$ stands for the closed ball of radius $r$ around $t$;
 for details we refer to Proposition \ref{p:late} below. A similar interpretation can be obtained for the inequality $\|(D-\la)^{-1}B^*\|\ge 1$ in terms of continuous functions defined in a closed subset $\sigma_{B^*}(D)$ of the spectrum of $D$.
 Therefore, the enclosure for the non-real part of the spectrum of $S$ in \eqref{encl_intro} implies a family of enclosures which resemble Gershgorin's circle theorem:
for any two positive continuous functions $f$ and $g$
with some specific behaviour at infinity we have that
\begin{equation}\label{Pappnasenbaer}
  \sigma(S)\setminus\R\,\subseteq\,
  \left(\bigcup_{t\in\sigma_B(A)}\mathcal{B}_{f(t)^{-1}\|f(A)B\|}(t)\right)\,\cap\,
  \left(\bigcup_{s\in\sigma_{B^*}(D)}\mathcal{B}_{g(s)^{-1}\|g(D)B^*\|}(s)\right);
\end{equation}
see Theorem \ref{t:gersh_f2} below.

Maybe the most interesting situations appear when one chooses the functions $f$ and $g$ explicitly. For instance, if
$A$ is boundedly invertible and $f(t) = |t|^{-1}$ then \eqref{Pappnasenbaer} implies
\begin{equation}\label{trasnoche}
  \sigma(S)\setminus\dR \subseteq
  \bigcup_{a\in\sigma_B(A)} \mathcal{B}_{|a|\|A^{-1}B\|}(a).
\end{equation}
Moreover, if $\|A^{-1}B\|<1$ then these balls are contained in a double-sector with half opening angle $\arcsin\|A^{-1}B\|$, see Figure \ref{fig:gershgorin17} below.
%
\begin{figure}[ht]
\begin{center}
\begin{tikzpicture}[scale=1.0]
  \draw[black] (-5,-1.291) -- (5,1.291);
  \draw[black] (-5,1.291) -- (5,-1.291);
  \filldraw[color=enclcol, thick, fill=enclcol!10] (0.9375,0.242) arc (104.5:255.5:0.25)
    -- (2.344,-0.605) arc (-104.5:104.5:0.625) -- cycle;
  \draw[enclcol!30, thin] (1,0) circle (0.25);
  \draw[enclcol!30, thin] (2.5,0) circle (0.625);
  \filldraw[color=enclcol, thick, fill=enclcol!10] (-0.8,0) circle (0.2);
  \filldraw[color=enclcol, thick, fill=enclcol!10] (-3.75,0.968)
    arc (75.5:284.5:1) -- (-3.28,-0.847) arc (-75.5:-18.8:0.875)
    arc (-149.1:149.1:0.55) arc (18.8:75.5:0.875) -- cycle;
  \draw[enclcol!30, thin] (-3.5,0) circle (0.875);
  \draw[enclcol!30, thin] (-4,0) circle (1);
  \draw[enclcol!30, thin] (-2.2,0) circle (0.55);
  \draw[->] (-6.5,0) -- (5.5,0);
  \draw[->] (0,-2) -- (0,2);
  \draw[color=blue, line width=1mm] (1,0) -- (2.5,0);
  \draw[color=blue, line width=1mm] (-4,0) -- (-3.5,0);
  \filldraw[blue] (-2.2,0) circle (0.05);
  \filldraw[blue] (-0.8,0) circle (0.05);
\end{tikzpicture}
\caption{The spectral enclosure for $\sigma(S)\setminus\dR$ given in \eqref{trasnoche}
(in orange) and the set $\sigma_B(A)$ (in blue).}
\label{fig:gershgorin17}
\end{center}
\end{figure}
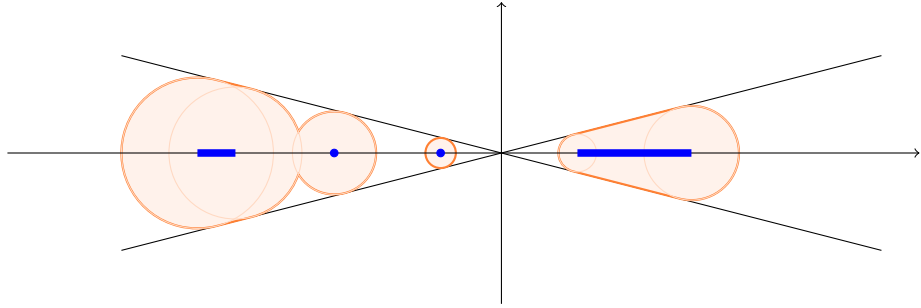

It is worth mentioning that \eqref{encl_intro} also improves the following spectral enclosure obtained in \cite{BPT13}: if $\mathcal{B}_r(\Delta) = \{z\in\C : \dist(z,\Delta)\le r\}$ then
\[
  \sigma(S)\setminus\R\,\subseteq\,\mathcal{B}_{\|B\|}(\sigma(A))\,\cap\,\mathcal{B}_{\|B\|}(\sigma(D)).
\]
In fact, $\|(A-\la)^{-1}B\|\ge 1$ obviously implies $\|(A-\la)^{-1}\|\|B\|\ge 1$ and the latter is equivalent to
$\la\in \mathcal{B}_{\|B\|}(\sigma(A))$. A similar argument with $D$ and $B^*$ instead of $A$ and $B$ completes the proof.


Finally, in Section~\ref{sec:jframe} we apply the spectral enclosures obtained in Section~\ref{smalll} to operator matrices of the form \eqref{PlazaMalvinas} which appear in frame theory. The so-called $J$-frame operators were introduced in \cite{GMMM12} and further investigated in \cite{GLLMMT}. Our findings lead to significant improvements of the spectral enclosures for $J$-frame operators obtained in \cite{GLLMMT}.

\section{Preliminaries}\label{prelim}

\noindent
If $\calH$ and $\calK$ are Hilbert spaces, we denote by $L(\calH,\calK)$
the space of all bounded linear operators mapping from $\calH$ to $\calK$.
As usual, we set $L(\calH) \defeq L(\calH,\calH)$.
For $r\ge 0$ and $\Delta\subseteq\C$ we set
\[
  \mathcal{B}_r(\Delta) \defeq \{z\in\C : \dist(z,\Delta)\le r\}.
\]
If $a\in\C$, we also write $\mathcal{B}_r(a) \defeq \mathcal{B}_r(\{a\})$ for the closed disc
with centre $a$ and radius $r$.

The numerical range of a linear operator $T$ in the Hilbert space $\calH$ is defined by
\[
  W(T) \defeq \{(Tx,x) : x\in\dom T,\,\|x\|=1\}.
\]
It is well known that the numerical range $W(T)$ is convex and that $\C\setminus\ol{W(T)}$ has
at most two (open) connected components (see \cite[V.3.2]{K95}).
Moreover, it is immediate from the definition of $W(T)$ that $\sigmap(T)\subseteq W(T)$,
where $\sigmap(T)$ stands for the point spectrum of $T$.
If $T$ is closed, $\la\in\C\setminus\ol{W(T)}$
and $x\in\dom T$, $\|x\|=1$, then
\[
  \|(T-\la)x\|\,\ge\,|(Tx,x)-\la|\,\ge\,\dist(\la,\ol{W(T)}).
\]
This shows that $\ran(T-\la)$ is closed and $\ker(T-\la) = \{0\}$.
Hence, if each of the (at most two) components of $\C\setminus\ol{W(T)}$ contains points
from the resolvent set $\rho(T)$, then $\sigma(T)\subseteq\ol{W(T)}$.
This holds in particular if $T$ is bounded.  The next lemma is now immediate.

\begin{lem}\label{l:kato}
Let $T = A+B$, where $A$ is a self-adjoint operator in a Hilbert space $\cH$ and $B\in L(\cH)$.
Then $\sigma(T)\subseteq\ol{W(T)}$ and for $\la\notin\ol{W(T)}$ we have
\begin{equation*}
  \|(T-\la)^{-1}\|\,\le\,\frac 1{\dist(\la,\ol{W(T)})}\,.
\end{equation*}
\end{lem}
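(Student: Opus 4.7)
The plan is to invoke the framework set up in the paragraph preceding the lemma. Note first that $T=A+B$ is closed with $\dom T=\dom A$, since $A$ is closed and $B$ is bounded. For $\la\notin\ol{W(T)}$ and unit $x\in\dom T$, the inequality $\|(T-\la)x\|\ge\dist(\la,\ol{W(T)})$ already recorded there gives $\ker(T-\la)=\{0\}$, closedness of $\ran(T-\la)$, and, once $T-\la$ is known to be bijective, the bound $\|(T-\la)^{-1}\|\le\dist(\la,\ol{W(T)})^{-1}$. So the whole proof reduces to exhibiting at least one point of $\rho(T)$ in each connected component of $\C\setminus\ol{W(T)}$.

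First I would localize a sizeable piece of $\rho(T)$. Since $A$ is self-adjoint, $\la\in\rho(A)$ whenever $\IM\la\neq 0$, and $\|(A-\la)^{-1}\|\le|\IM\la|^{-1}$. For $|\IM\la|>\|B\|$ we therefore have $\|B(A-\la)^{-1}\|<1$, so $I+B(A-\la)^{-1}$ is boundedly invertible by a Neumann series argument, and the factorisation
\[
  T-\la=\bigl(I+B(A-\la)^{-1}\bigr)(A-\la)
\]
on $\dom T=\dom A$ yields $\{\la\in\C:|\IM\la|>\|B\|\}\subseteq\rho(T)$.

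The remaining geometric step is to verify that each component of $\C\setminus\ol{W(T)}$ meets $\{|\IM\la|>\|B\|\}$. Assuming $\ol{W(T)}\neq\C$, the closed convex set $\ol{W(T)}$ falls into one of three cases. If it contains no full line, then separating any $p\notin\ol{W(T)}$ from $\ol{W(T)}$ places $\ol{W(T)}$ inside some closed half-plane, whose open complement is a full open half-plane lying in the (unique) component of $\C\setminus\ol{W(T)}$. If $\ol{W(T)}$ is itself a closed half-plane, its open complement is again a single open half-plane, forming the unique component. Finally, if $\ol{W(T)}$ contains a line but is not a half-plane, its recession cone is one-dimensional and convexity forces $\ol{W(T)}$ into some closed strip parallel to that line; the two open half-planes outside this strip then sit in the two components of the complement. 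In all three cases each component of $\C\setminus\ol{W(T)}$ contains an entire open half-plane of $\C$, and any open half-plane contains points with $|\IM\la|$ arbitrarily large, hence meets $\{|\IM\la|>\|B\|\}\subseteq\rho(T)$. Together with the reduction in the first paragraph this yields $\sigma(T)\subseteq\ol{W(T)}$ and the claimed resolvent bound. The main obstacle is precisely this convex-geometric step: one must rule out that a component of $\C\setminus\ol{W(T)}$ sits entirely inside the horizontal strip $|\IM\la|\le\|B\|$, which is settled by the observation that each such component already contains a full open half-plane.
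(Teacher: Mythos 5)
Your proof is correct and follows essentially the same route as the paper, which simply declares the lemma ``immediate'' from the preceding paragraph: you supply exactly the two details left implicit there, namely that $\{\la : |\Im\la|>\|B\|\}\subseteq\rho(T)$ via the factorisation $T-\la=(I+B(A-\la)^{-1})(A-\la)$, and that every component of $\C\setminus\ol{W(T)}$ meets this set. Your convex-geometric case analysis is sound, though it could be shortened: the separation argument alone shows every point of $\C\setminus\ol{W(T)}$ lies in an open half-plane disjoint from $\ol{W(T)}$, and since $\IM(Tx,x)=\IM(Bx,x)$ one even has $\ol{W(T)}\subseteq\{z:|\Im z|\le\|B\|\}$ directly.
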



Let us also recall the definition of the quadratic numerical range,
which was introduced in \cite{LT98}; see also \cite[Definition~1.1.1]{Tretter08}.
Assume that the Hilbert space $\cH$ is the orthogonal sum of two Hilbert spaces, $\cH_+$ and $\cH_-$.
Let $S$ be a bounded operator in $\cH$ decomposed as
\[
  S= \mymatrix{A & B \\
  C & D},
\]
where $A \in L(\cH_+)$, $B \in L(\cH_-,\cH_+)$, $C \in L(\cH_+,\cH_-)$, and $D \in L(\cH_-)$.
For $f\in\cH_+$ and $g\in\cH_-$ with $\|f\|=\|g\|=1$
we introduce the $2 \times 2$ matrix
\begin{equation}\label{defSfg}
  S_{f,g} = \mymatrix{
    (Af,f) & (Bg,f) \\[1ex]
    (Cf,g) & (Dg,g) }.
\end{equation}
The set 	
\[
  W^2(S) \defeq \bigcup_{\substack{f\in\cH_+,\,g\in\cH_- \\[0.3ex] \|f\|=\|g\|= 1}} \sigmap(S_{f,g})
\]
is called the \emph{quadratic numerical range of} $S$.
It is no longer a convex subset of $\dC$, but it has at most two connected components.

One of the advantages of the quadratic numerical range is that it is contained in the numerical range: $W^2(S)\subseteq W(S)$ and that we have the following refined spectral inclusions
\begin{equation}\label{W2specincl}
  \sigmap(S) \subseteq W^2(S)
  \qquad\text{and}\qquad
  \sigma(S) \subseteq \ol{W^2(S)};
\end{equation}
see \cite[Theorem~1.3.1]{Tretter08}.  Moreover, the resolvent can be estimated
in terms of the distance to $W^2(S)$:
\begin{equation}\label{resolvestW2a}
  \|(S-\lambda)^{-1}\| \le \frac{\|S\|+|\lambda|}{[\dist(\lambda,W^2(S))]^2},
  \qquad \lambda\notin\ol{W^2(S)};
\end{equation}
see \cite[Theorem~1.4.1]{Tretter08}.  If $\ol{W^2(S)}=F_1\cup F_2$
with disjoint non-empty closed sets $F_1$ and $F_2$, then
\begin{equation}\label{resolvestW2b}
  \|(S-\lambda)^{-1}\| \le \frac{\|S\|+|\lambda|}{\dist(\lambda,F_1)\dist(\lambda,F_2)},
  \qquad \lambda\notin\ol{W^2(S)};
\end{equation}
see \cite[Theorem~1.4.5]{Tretter08}.

The quadratic numerical range definition can be easily extended to
unbounded block operator matrices, restricting the vectors $f$ and $g$ in \eqref{defSfg}
to the proper domains $(\dom A)\cap(\dom C)$ and $(\dom B)\cap(\dom D)$, respectively.
For details see \cite[Definition~2.5.1]{Tretter08}.

\medskip
In the following we recall the definition of the Schur complements of a block operator matrix,
which are powerful tools to study the spectrum and spectral properties.
Let $A$ and $D$ be closed operators in $\calH_+$ and $\calH_-$, respectively,
$B\in L(\calH_-,\calH_+)$, and $C\in L(\calH_+,\calH_-)$.  For the block operator matrix
\[
  S = \mymatrix{ A & B \\[0.5ex] C & D }, \quad\dom S = \dom A\oplus\dom D,
\]
the first and second Schur complements of $S$ are defined by:
\begin{align}\label{eq:Schurcomp}
  S_1(\la) &\defeq A - \la - B(D-\la)^{-1}C,\qquad\la\in\rho(D), \\
  S_2(\la) &\defeq D - \la - C(A-\la)^{-1}B,\qquad\la\in\rho(A).
\end{align}
These are analytic operator functions defined on the resolvent sets of $D$ and $A$, respectively.

In the 
next sections we shall make use of the following
auxiliary result from \cite{N89}; see also \cite[Theorem 2.3.3]{Tretter08}. 

\begin{lem}[{\cite[Theorem 2.4]{N89}}]\label{l:schurli}
Let $A$ and $D$ be closed operators in $\calH_+$ and $\calH_-$, respectively,
let $B\in L(\calH_-,\calH_+)$ and $C\in L(\calH_+,\calH_-)$, and consider the block operator matrix
\[
  S = \mymatrix{ A & B \\[0.5ex] C & D },\quad\dom S = \dom A\oplus\dom D.
\]
Then the following statements hold:
\begin{myenum}
\item  For $\la\in\rho(D)$ one has $\la\in\sigma(S)$ if and only if\, $0\in\sigma(S_1(\la))$.
\item  For $\la\in\rho(A)$ one has $\la\in\sigma(S)$ if and only if\, $0\in\sigma(S_2(\la))$.
\end{myenum}
Moreover, if $\la\in\rho(D)\cap\rho(S)$, then
\[
  (S - \la)^{-1}
  = \bmat{I}{0}{-(D-\la)^{-1}C}{I}  \bmat{S_1(\la)^{-1}}{0}{0}{(D-\la)^{-1}}
  \bmat{I}{-B(D-\la)^{-1}}{0}{I}.
\]
\end{lem}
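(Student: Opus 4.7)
The plan is to establish both parts of the lemma through a single Frobenius--Schur factorization of $S-\lambda$. For $\lambda\in\rho(D)$ the central identity is
\[
  S-\lambda
  = \bmat{I}{B(D-\lambda)^{-1}}{0}{I}
    \bmat{S_1(\lambda)}{0}{0}{D-\lambda}
    \bmat{I}{0}{(D-\lambda)^{-1}C}{I},
\]
which I would verify by a straightforward matrix multiplication, using the definition $S_1(\lambda)=A-\lambda-B(D-\lambda)^{-1}C$. I would be careful with domains: the right factor maps $\dom A\oplus\dom D$ bijectively onto itself because $(D-\lambda)^{-1}C$ is bounded from $\calH_+$ to $\calH_-$ and $\dom A\oplus\dom D$ is invariant; the left factor is bounded and boundedly invertible on $\calH=\calH_+\oplus\calH_-$; and the middle factor, acting on $\dom A\oplus\dom D$, is the direct sum of $S_1(\lambda)\colon\dom A\to\calH_+$ and $D-\lambda\colon\dom D\to\calH_-$.

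From this factorization statement (i) is immediate: both triangular factors are bijections with bounded inverses, so $S-\lambda$ is bijective (from $\dom S$ onto $\calH$) if and only if the diagonal middle factor is bijective, and since $D-\lambda$ is already a bijection this happens exactly when $S_1(\lambda)$ is a bijection of $\dom A$ onto $\calH_+$, i.e.\ when $0\in\rho(S_1(\lambda))$. Statement (ii) follows by symmetry: for $\lambda\in\rho(A)$ one uses the dual factorization
\[
  S-\lambda
  = \bmat{I}{0}{C(A-\lambda)^{-1}}{I}
    \bmat{A-\lambda}{0}{0}{S_2(\lambda)}
    \bmat{I}{(A-\lambda)^{-1}B}{0}{I}
\]
and argues in the same way.

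For the resolvent formula, assume in addition $\lambda\in\rho(S)$. The three factors in the first displayed identity are then all boundedly invertible (the outer ones trivially, the middle one by part (i)), so inverting the product in reverse order yields
\[
  (S-\lambda)^{-1}
  = \bmat{I}{0}{-(D-\lambda)^{-1}C}{I}
    \bmat{S_1(\lambda)^{-1}}{0}{0}{(D-\lambda)^{-1}}
    \bmat{I}{-B(D-\lambda)^{-1}}{0}{I},
\]
which is the asserted formula.

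The only genuinely delicate point I anticipate is keeping track of operator domains across the factorization, since $A$ and $D$ are only assumed closed and need not be bounded; however, because $B$ and $C$ are bounded everywhere-defined operators, the triangular factors preserve $\dom A\oplus\dom D$, and the algebraic identity makes sense as an equality of operators with this common domain. Once this bookkeeping is in place, both the spectral equivalence and the resolvent formula drop out from the factorization essentially by inspection.
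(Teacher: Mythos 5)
The paper does not prove this lemma at all: it is quoted verbatim from the literature (Nagel \cite{N89}; see also \cite[Theorem~2.3.3]{Tretter08}), so there is no in-paper proof to compare against. Your Frobenius--Schur factorization argument is precisely the standard proof given in those sources, and it is correct, including the domain bookkeeping; the only step left implicit is that $S$ and $S_1(\la)$ are closed (each being a closed operator plus a bounded one), which is what lets you upgrade ``bijective'' to ``$0\in\rho(\cdot)$'' via the closed graph theorem.
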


\medskip

\section{Sharp enclosures for the quadratic numerical range}\label{specinc}

\noindent
Let $S$ be as in \eqref{PlazaMalvinas} with bounded operators
\[
  A \in L(\cH_+), \quad B \in L(\cH_-,\cH_+), \quad D \in L(\cH_-),
\]
where $A$ and $D$ are self-adjoint in the Hilbert spaces $\cH_+$ and $\cH_-$,
respectively.  Hence, the numerical ranges $W(A)$ and $W(D)$ are real intervals.
We introduce the following numbers, which are used for the description
of the enclosures that are proved below:
\begin{alignat}{2}
  a_- &\defeq \inf\, W(A),  & a_+ &\defeq \sup\, W(A), \label{defconstapm}\\
  d_- &\defeq \inf\, W(D),  & d_+ &\defeq \sup\, W(D), \label{defconstdpm}\\
  m_- &\defeq \frac{a_-+d_-}{2}\,,  & m_+ &\defeq \frac{a_++d_+}{2}, \label{defconstmpm}\\
  c \defeq \frac{1}{2}\bigl[\min\{a_+,d_+\}+&\max\{a_-,d_-\}\bigr]  \;\;\;\;\text{and}&
  \ell &\defeq \frac 1 2\dist\bigl(\ol{W(A)},\ol{W(D)}\bigr)
	\label{defconstcl}
\end{alignat}
If $\ol{W(A)}\cap\ol{W(D)}=\varnothing$, then $c$ is the midpoint
of the gap between $\ol{W(A)}=[a_-,a_+]$ and $\ol{W(D)}=[d_-,d_+]$,
and $\ell$ is half the length of the gap;
e.g.\ if $d_+<a_-$, then 
\[
  c=\tfrac{1}{2}(d_++a_-)\quad \text{and}\quad \ell =\tfrac{1}{2}(a_--d_+).
\]

The following proposition contains enclosures of the closure of the
quadratic numerical range of operators of the form \eqref{PlazaMalvinas}.
Due to \eqref{W2specincl} these yield also enclosures for the spectrum.
Note that the enclosures in Proposition~\ref{pr:spec_incl}
depend only on $W(A)$, $W(D)$ and $\|B\|$.
They are illustrated in Figures~\ref{fig:encl_nonsep_B_small}--\ref{fig:encl_sep_B_large} below.

\begin{prop}\label{pr:spec_incl}
Given a Hilbert space $\cH=\cH_+\oplus\cH_-$, consider the block operator matrix
\begin{equation}\label{bom_Jsa}
  S \defeq \mymatrix{ A & B \\[0.5ex] -B^* & D }
\end{equation}
where $B\in L(\cH_-,\cH_+)$, and $A$ and $D$ are bounded self-adjoint operators
in $\cH_+$ and $\cH_-$, respectively.
Further, let the constants $a_\pm$, $d_\pm$, $m_\pm$, $c$ and $\ell$
be as in \eqref{defconstapm}--\eqref{defconstcl}.  Then
\begin{align}
  \ol{W^2(S)}\cap\dR &\subseteq
  \bigl[\min\{a_-,d_-\},\max\{a_+,d_+\}\bigr],
  \label{realspecincl}\\[0.5ex]
  \ol{W^2(S)}\setminus\dR &\subseteq
  \mathcal{B}_{\|B\|}\bigl([a_-,a_+]\bigr)\,\cap\,
  \mathcal{B}_{\|B\|}\bigl([d_-,d_+]\bigr)\,\cap\,\{z\in\C : m_- \le \RE z \le m_+\}.
  \label{nonrealspecincl}
\end{align}

Moreover, in the special case where
\begin{equation}\label{e:auseinander}
  \ol{W(A)}\cap\ol{W(D)} = \varnothing
  \qquad\text{and}\qquad
  \|B\|\le \ell, 
\end{equation}
we have
\begin{equation}\label{specincla}
  \ol{W^2(S)} \subseteq \Biggl[\min\{a_-,d_-\},\;
  c - \sqrt{\ell^2-\|B\|^2}\,\Biggr] \cup \Biggl[c + \sqrt{\ell^2-\|B\|^2},\;
  \max\{a_+,d_+\}\Biggr];
\end{equation}
in particular, both the quadratic numerical range and the spectrum of $S$ are real.
\end{prop}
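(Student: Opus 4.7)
The plan is to reduce every claim to an analysis of the eigenvalues of the $2\times 2$ matrix $S_{f,g}$ from \eqref{defSfg}. For unit vectors $f\in\cH_+$, $g\in\cH_-$ set $\alpha\defeq(Af,f)$, $\delta\defeq(Dg,g)$, and $\beta\defeq(Bg,f)$; self-adjointness of $A$ and $D$ makes $\alpha\in[a_-,a_+]$ and $\delta\in[d_-,d_+]$ real, while $|\beta|\le\|B\|$. The characteristic polynomial of
\[
  S_{f,g}\;=\;\bmat{\alpha}{\beta}{-\bar\beta}{\delta}
\]
is $\mu^2-(\alpha+\delta)\mu+\alpha\delta+|\beta|^2$, so its two eigenvalues are
\[
  \mu_\pm\;=\;\frac{\alpha+\delta}{2}\,\pm\,\sqrt{\Bigl(\tfrac{\alpha-\delta}{2}\Bigr)^{\!2}-|\beta|^2}\,.
\]
Since the right-hand sides of \eqref{realspecincl}--\eqref{specincla} are all closed sets, it is enough to prove the corresponding inclusions for $W^2(S)$ itself and then pass to the closure.

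For \eqref{realspecincl} I verify in both sign cases of the discriminant that $\Re\mu_\pm\in[\min(a_-,d_-),\max(a_+,d_+)]$: if $(\tfrac{\alpha-\delta}{2})^2\ge|\beta|^2$ the eigenvalues are real and trivially lie in $[\min(\alpha,\delta),\max(\alpha,\delta)]$; if the discriminant is negative, $\Re\mu_\pm=(\alpha+\delta)/2\in[m_-,m_+]$, which is also contained in the bigger interval. Intersecting $\Re(\ol{W^2(S)})$ with $\R$ then gives \eqref{realspecincl}. For \eqref{nonrealspecincl} it suffices to treat $\mu_\pm\notin\R$; then $\Re\mu_\pm=(\alpha+\delta)/2\in[m_-,m_+]$ and a direct computation yields
\[
  |\mu_\pm-\alpha|^2\;=\;\bigl(\tfrac{\delta-\alpha}{2}\bigr)^2+|\beta|^2-\bigl(\tfrac{\alpha-\delta}{2}\bigr)^2\;=\;|\beta|^2\;\le\;\|B\|^2,
\]
so $\dist(\mu_\pm,[a_-,a_+])\le\|B\|$; the identical identity with $\delta$ in place of $\alpha$ handles $[d_-,d_+]$.

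For the sharper enclosure \eqref{specincla}, assume without loss of generality that $a_+<d_-$, so $c=(a_++d_-)/2$ and $\ell=(d_--a_+)/2$. Writing $u\defeq(\alpha+\delta)/2$ and $v\defeq(\delta-\alpha)/2$, the constraints $\alpha\in[a_-,a_+]$, $\delta\in[d_-,d_+]$ translate into the cone inequality $|u-c|\le v-\ell$ together with $v\ge\ell$. In particular $v\ge\ell\ge\|B\|\ge|\beta|$, so the non-real case is excluded and both $\mu_\pm$ are real. The desired bound $\mu_-\le c-\sqrt{\ell^2-\|B\|^2}$ is equivalent to
\[
  u-c\;\le\;\sqrt{v^2-|\beta|^2}-\sqrt{\ell^2-\|B\|^2},
\]
which I obtain by chaining $u-c\le v-\ell$ (the cone constraint), $v-\ell\le\sqrt{v^2-|\beta|^2}-\sqrt{\ell^2-|\beta|^2}$ (because the derivative of $x\mapsto\sqrt{x^2-|\beta|^2}$ is at least $1$ on $[|\beta|,\infty)$), and $\sqrt{\ell^2-|\beta|^2}\ge\sqrt{\ell^2-\|B\|^2}$ (from $|\beta|\le\|B\|$). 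The symmetric bound $\mu_+\ge c+\sqrt{\ell^2-\|B\|^2}$ is analogous, and the outer bounds $\mu_-\ge\alpha\ge\min(a_-,d_-)$, $\mu_+\le\delta\le\max(a_+,d_+)$ already appeared in the proof of \eqref{realspecincl}. Because the resulting enclosure is a subset of $\R$, the inclusion $\sigma(S)\subseteq\ol{W^2(S)}\subseteq\R$ from \eqref{W2specincl} yields the last assertion.

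I expect the main obstacle to be the geometric bookkeeping in \eqref{specincla}: tracking how the rectangle $(\alpha,\delta)\in[a_-,a_+]\times[d_-,d_+]$ forces $(u,v)$ into the cone $|u-c|\le v-\ell$, and then combining this with the slope-$\ge 1$ property of $x\mapsto\sqrt{x^2-|\beta|^2}$ so as to push both eigenvalues out of the critical gap around $c$. Claims \eqref{realspecincl} and \eqref{nonrealspecincl} are, by comparison, direct consequences of the explicit eigenvalue formula for $S_{f,g}$.
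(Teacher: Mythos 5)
Your proof is correct, and its skeleton coincides with the paper's: every claim is reduced to the eigenvalues $\mu_\pm=\frac{\alpha+\delta}{2}\pm\sqrt{(\frac{\alpha-\delta}{2})^2-|\beta|^2}$ of the $2\times2$ matrix $S_{f,g}$ with $\alpha\in[a_-,a_+]$, $\delta\in[d_-,d_+]$, $|\beta|\le\|B\|$. The difference lies in how the resulting scalar facts are established. The paper imports \eqref{realspecincl} from \cite[Proposition~1.2.6]{Tretter08} and deduces \eqref{nonrealspecincl} by quoting the enclosures of \cite{LLMT05} and \cite{BPT13} applied to the matrix $S_{f,g}$, whereas you prove both from scratch; your identity $|\mu_\pm-\alpha|=|\beta|$ in the non-real case is precisely the computation hidden behind those citations, so your route buys self-containedness at essentially no extra cost. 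For \eqref{specincla} the paper argues that $z_+$ is increasing in $\alpha$ and decreasing in $\delta$, so that the extremum is attained at the corner of the rectangle closest to the gap, while you change variables to $(u,v)=(\frac{\alpha+\delta}{2},\frac{\delta-\alpha}{2})$, derive the cone constraint $|u-c|\le v-\ell$ with $v\ge\ell\ge\|B\|\ge|\beta|$, and exploit that $x\mapsto\sqrt{x^2-|\beta|^2}$ has slope at least one on $[|\beta|,\infty)$; the two arguments are equivalent in substance and both elementary, the paper's being marginally shorter and yours making the geometry of the gap more explicit. One cosmetic point: the rectangle of admissible $(\alpha,\delta)$ only maps \emph{into} the cone $\{|u-c|\le v-\ell\}$, not onto it, which is all you actually use, but the phrase ``translate into'' could be misread as an equivalence.
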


\begin{proof}
Inclusion \eqref{realspecincl} follows from \cite[Proposition~1.2.6]{Tretter08}.
It is sufficient to show \eqref{nonrealspecincl} and \eqref{specincla}
without the closures on the left-hand sides because the right-hand sides are closed sets.
Let $z\in W^2(S)$.
Then there exist $f\in\cH_+$ and $g\in\cH_-$ with $\|f\|=\|g\|=1$
such that $z$ is an eigenvalue of the matrix $S_{f,g} = \smallmat \alpha\beta{-\ol\beta}\delta$
in \eqref{defSfg} with $\alpha = (Af,f)\in[a_-,a_+]$, $\delta = (Dg,g)\in[d_-,d_+]$
and $\beta = (Bg,f)$, which satisfies $|\beta|\le\|B\|$.
By \cite[Theorem~2.1]{LLMT05} and \cite[Theorem~3.5]{BPT13} the inclusion \eqref{nonrealspecincl}
holds for $\ol{W^2(S)}$ replaced by $\sigma(S)$.
Applying this to the matrix $S_{f,g}$ we obtain that in the case when $z$ is non-real,
$z$ is contained in
\[
  \calB_{|\beta|}(\{\alpha\})\cap\calB_{|\beta|}(\{\delta\})\cap\{z\in\C : m_- \le \RE z \le m_+\}.
\]
and hence in the right-hand side of \eqref{nonrealspecincl}.

It remains to show \eqref{specincla}.
We assume without loss of generality that $d_+<a_-$, in which case $c=\frac{1}{2}(a_-+d_+)$ and
$\ell=\frac 1 2(a_--d_+)$.
By \eqref{nonrealspecincl}, $z\in\R$, and hence $z=z_+$ or $z=z_-$ where
\[
  z_\pm \defeq \frac{\alpha+\delta}{2}
  \pm \sqrt{\Bigl(\frac{\alpha-\delta}{2}\Bigr)^2-|\beta|^2}.
\]
It is easy to see that $z_+$ is increasing in $\alpha$ and decreasing in $\delta$.
Therefore
\[
  z_+ \ge \frac{a_-+d_+}{2}+\sqrt{\Bigl(\frac{a_--d_+}{2}\Bigr)^2-|\beta|^2}
  \ge c + \sqrt{\ell^2-\|B\|^2}
\]
and, similarly, $z_-\le c - \sqrt{\ell^2-\|B\|^2}$. Together with \eqref{realspecincl}, this shows the inclusion \eqref{specincla}.
\end{proof}

\begin{rem}\label{re:encl_lit}
\begin{myenuma}
\item
Parts of Proposition \ref{pr:spec_incl} are known:
inclusion \eqref{realspecincl} is from \cite[Proposition~1.2.6]{Tretter08}.

\item
Inclusion \eqref{nonrealspecincl} is an improvement of a similar result in
\cite[Proposition~1.2.6]{Tretter08}.
To be more precise, in \cite[Proposition~1.2.6]{Tretter08} the following
inclusion is proved, when $\|B\|>\ell$,
\begin{equation}\label{nonrealinclTretter}
  \ol{W^2(S)}\setminus\R
  \subseteq \biggl\{z\in\dC: |\Im z| \le \sqrt{\|B\|^2-\ell^2}\biggr\}
  \cap \bigl\{z\in\C : m_- \le \RE z \le m_+\bigr\};
\end{equation}
cf.\ also \cite[Proposition~1.3.9]{Tretter08}.
It is easy to see that the right-hand side of \eqref{nonrealspecincl}
is contained in the right-hand side of \eqref{nonrealinclTretter},
and in most cases the inclusion is strict;
see also Figures~\ref{fig:encl_nonsep_B_small}--\ref{fig:encl_sep_B_large} below.


\item
Inclusion \eqref{specincla} improves \cite[Proposition~1.2.6]{Tretter08} significantly,
namely, the former implies that the interval
\begin{equation*}
\begin{aligned}
 \biggl(
  c - \sqrt{\ell^2-\|B\|^2}\, ,c + \sqrt{\ell^2-\|B\|^2}\biggr)
\end{aligned}
\end{equation*}
has empty intersection with $\ol{W^2(S)}$ if $\|B\|<\ell$.  
A similar result for the spectrum of $S$ (which is, in general, a smaller set)
can be found in \cite[Theorem~4.2]{CT16}
and, in a somehow different form, in \cite[Theorem~5.8]{AMS09} and \cite[Theorem~5.4]{AMT10}.

\item
For enclosures for the quadratic numerical range and the spectrum
where all entries $A$, $B$, and $D$ are allowed to be unbounded
see \cite[Proposition~4.10 and Theorem~4.13]{LS17}.
\end{myenuma}
\end{rem}

Figures~\ref{fig:encl_nonsep_B_small}--\ref{fig:encl_sep_B_large} below show
the enclosures for
$\ol{W^2(S)}$ from Proposition~\ref{pr:spec_incl}.
In Figures~\ref{fig:encl_nonsep_B_small} and \ref{fig:encl_nonsep_B_large}
the situation where $\ol{W(A)}\cap\ol{W(D)}\ne\varnothing$ is considered.
If $\|B\|$ is less than or equal to
\[
  \tau \defeq \min\big\{m_+-\min\{a_+,d_+\}, \,\max\{a_-,d_-\}-m_-\big\},
\]
the non-real spectrum is contained in
$\mathcal{B}_{\|B\|}([a_-,a_+])\cap \mathcal{B}_{\|B\|}([d_-,d_+])=\mathcal{B}_{\|B\|}(\ol{W(A)}\cap\ol{W(D)})$.
When $\|B\|>\tau$ then the enclosure in $\{z\in\dC:m_-\le\Re z\le m_+\}$,
the third set on the right-hand side of \eqref{nonrealspecincl},
has to be taken into account as well.

The case when there is a gap between $\ol{W(A)}$ and $\ol{W(D)}$ is considered
in Figures~\ref{fig:encl_sep_B_small} and \ref{fig:encl_sep_B_large}.
When $\|B\|$ is small, then
$\ol{W^2(S)}$ is contained
in the union of the two real intervals on the right-hand side of \eqref{specincla}.
When $\|B\|$ is larger, then the spectrum may be non-real, and the
right-hand sides of \eqref{realspecincl} and \eqref{nonrealspecincl}
have to be used.

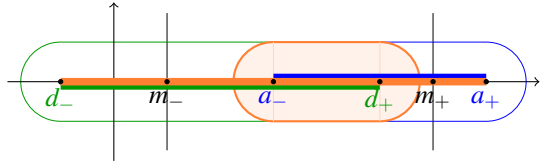
\begin{figure}[H]
\begin{center}
\begin{tikzpicture}[scale=.7]
  \draw[black, ultra thin] (1,-1.3) -- (1,1.3);
  \draw[black, ultra thin] (6,-1.3) -- (6,1.3);
  \draw[blue, ultra thin] (5,0.75) -- (7,0.75) arc (90:-90:0.75) -- (5,-0.75);
  \draw[mygreen, ultra thin] (3,0.75) -- (-1,0.75) arc (90:270:0.75) -- (3,-0.75);
  \filldraw[color=enclcol, thick, fill=enclcol!10] (5,0.75) -- (3,0.75) arc (90:270:0.75)
  -- (5,-0.75) arc (-90:90:0.75);
  \draw[color=enclcol!30, thin] (3,0.75) -- (3,-0.75);
  \draw[color=enclcol!30, thin] (5,0.75) -- (5,-0.75);
  \draw[->] (-2,0) -- (8,0);
  \draw[->] (0,-1.5) -- (0,1.5);
  \draw[color=mygreen, line width=1mm] (-1,-0.08) -- (5,-0.08);
  \draw[color=blue, line width=1mm] (3,0.08) -- (7,0.08);
  \draw[color=enclcol, line width=1mm] (-1,0) -- (7,0);
  \filldraw[black] (-1,0) circle (0.04);
  \filldraw[black] (1,0) circle (0.04);
  \filldraw[black] (3,0) circle (0.04);
  \filldraw[black] (5,0) circle (0.04);
  \filldraw[black] (6,0) circle (0.04);
  \filldraw[black] (7,0) circle (0.04);
  \node[color=mygreen] at (-1,-0.35) {$d_-$};
  \node at (1,-0.35) {$m_-$};
  \node[color=blue] at (3,-0.35) {$a_-$};
  \node[color=mygreen] at (5,-0.35) {$d_+$};
  \node at (6,-0.35) {$m_+$};
  \node[color=blue] at (7,-0.35) {$a_+$};
\end{tikzpicture}
\caption{The region (indicated in orange) given by the union of the sets
on the right-hand sides of \eqref{realspecincl} and \eqref{nonrealspecincl}
that contains
$\ol{W^2(S)}$,
when $\ol{W(A)}$ and $\ol{W(D)}$ overlap and $\|B\|< \tau$.}
\label{fig:encl_nonsep_B_small}
\end{center}
\end{figure}

\begin{figure}[H]
\begin{center}
\begin{tikzpicture}[scale=.7]
  \draw[black, ultra thin] (1,-1.9) -- (1,1.9);
  \draw[black, ultra thin] (6,-1.9) -- (6,1.9);
  \draw[blue, ultra thin] (5,1.75) -- (7,1.75) arc (90:-90:1.75) -- (5,-1.75);
  \draw[mygreen, ultra thin] (3,1.75) -- (-1,1.75) arc (90:270:1.75) -- (3,-1.75);
  \draw[mygreen, ultra thin] (6,1.436) arc (55:-55:1.75);
  \filldraw[color=enclcol, thick, fill=enclcol!10] (5,1.75) -- (3,1.75) arc (90:270:1.75)
  -- (5,-1.75) arc (-90:-55:1.75) -- (6,1.436) arc (55:90:1.75);
  \draw[color=enclcol!30, thin] (3,1.75) -- (3,-1.75);
  \draw[color=enclcol!30, thin] (5,1.75) -- (5,-1.75);
  \draw[->] (-2.9,0) -- (9,0);
  \draw[->] (0,-2.1) -- (0,2.1);
  \draw[color=mygreen, line width=1mm] (-1,-0.08) -- (5,-0.08);
  \draw[color=blue, line width=1mm] (3,0.08) -- (7,0.08);
  \draw[color=enclcol, line width=1mm] (-1,0) -- (7,0);
  \filldraw[black] (-1,0) circle (0.04);
  \filldraw[black] (1,0) circle (0.04);
  \filldraw[black] (3,0) circle (0.04);
  \filldraw[black] (5,0) circle (0.04);
  \filldraw[black] (6,0) circle (0.04);
  \filldraw[black] (7,0) circle (0.04);
  \node[color=mygreen] at (-1,-0.35) {$d_-$};
  \node at (1,-0.35) {$m_-$};
  \node[color=blue] at (3,-0.35) {$a_-$};
  \node[color=mygreen] at (5,-0.35) {$d_+$};
  \node at (6,-0.35) {$m_+$};
  \node[color=blue] at (7,-0.35) {$a_+$};
\end{tikzpicture}
\caption{The region (indicated in orange) given by the union of the sets
on the right-hand sides of \eqref{realspecincl} and \eqref{nonrealspecincl}
that contains  $\ol{W^2(S)}$,
when $\ol{W(A)}$ and $\ol{W(D)}$ overlap and $\|B\|>\tau$.}
\label{fig:encl_nonsep_B_large}
\end{center}
\end{figure}

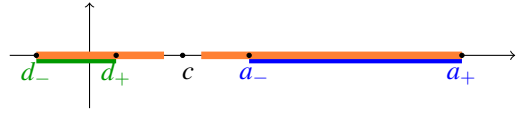
\begin{figure}[H]
\begin{center}
\begin{tikzpicture}[scale=.7]
  \draw[->] (-1.5,0) -- (8,0);
  \draw[->] (0,-1) -- (0,1);
  \draw[color=mygreen, line width=1mm] (-1,-0.08) -- (0.5,-0.08);
  \draw[color=blue, line width=1mm] (3,-0.08) -- (7,-0.08);
  \draw[color=enclcol, line width=1mm] (-1,0) -- (1.4,0);
  \draw[color=enclcol, line width=1mm] (2.1,0) -- (7,0);
  \filldraw[black] (-1,0) circle (0.04);
  \filldraw[black] (0.5,0) circle (0.04);
  \filldraw[black] (1.75,0) circle (0.04);
  \filldraw[black] (3,0) circle (0.04);
  \filldraw[black] (7,0) circle (0.04);
  \node[color=mygreen] at (-1,-0.35) {$d_-$};
  \node[color=mygreen] at (0.5,-0.35) {$d_+$};
  \node at (1.85,-0.35) {$c$};
  \node[color=blue] at (3.1,-0.4) {$a_-$};
  \node[color=blue] at (7,-0.4) {$a_+$};
\end{tikzpicture}
\caption{The two intervals (indicated in orange)
on the right-hand side of \eqref{specincla} whose union
contains  $\ol{W^2(S)}$,
when $\ol{W(A)}$ and $\ol{W(D)}$ are separated and $\|B\|<\tau$.}
\label{fig:encl_sep_B_small}
\end{center}
\end{figure}

\begin{figure}[H]
\begin{center}
\begin{tikzpicture}[scale=.75]
  \draw[blue, ultra thin] (1.75,1.87) arc (124:90:2.25) -- (7,2.25)
  arc (90:-90:2.25) -- (3,-2.25) arc (-90:-124:2.25);
  \draw[blue, ultra thin] (1,1.03) arc (152:208:2.25);
  \draw[mygreen, ultra thin] (1.75,1.87) arc (56:90:2.25) -- (-1,2.25)
  arc (90:270:2.25) -- (0.5,-2.25) arc (-90:-56:2.25);
  \draw[black, ultra thin] (1,-2.6) -- (1,2.6);
  \draw[black, ultra thin] (3.75,-2.6) -- (3.75,2.6);
  \filldraw[color=enclcol, thick, fill=enclcol!10] (1.75,1.87) arc (124:152:2.25)
  -- (1,-1.03) arc (207:236:2.25) arc (-56:56:2.25);
  \draw[color=enclcol!30, thin] (1.75,1.87) -- (1.75,-1.87);
  \draw[->] (-3.5,0) -- (9.5,0);
  \draw[->] (0,-3) -- (0,3);
  \draw[color=mygreen, line width=1mm] (-1,-0.08) -- (0.5,-0.08);
  \draw[color=blue, line width=1mm] (3,-0.08) -- (7,-0.08);
  \draw[color=enclcol, line width=1mm] (-1,0) -- (7,0);
  \filldraw[black] (-1,0) circle (0.04);
  \filldraw[black] (0.5,0) circle (0.04);
  \filldraw[black] (1,0) circle (0.04);
  \filldraw[black] (1.75,0) circle (0.04);
  \filldraw[black] (3,0) circle (0.04);
  \filldraw[black] (3.75,0) circle (0.04);
  \filldraw[black] (7,0) circle (0.04);
  \node[color=mygreen] at (-1,-0.35) {$d_-$};
  \node[color=mygreen] at (0.5,-0.35) {$d_+$};
  \node at (1.1,-0.4) {$m_-$};
  \node at (1.85,-0.35) {$c$};
  \node at (3.75,-0.4) {$m_+$};
  \node[color=blue] at (3.1,-0.4) {$a_-$};
  \node[color=blue] at (7,-0.4) {$a_+$};
\end{tikzpicture}
\caption{The region (indicated in orange) given by the union of the sets
on the right-hand sides of \eqref{realspecincl} and \eqref{nonrealspecincl}
that contains $\ol{W^2(S)}$,
when $\ol{W(A)}$ and $\ol{W(D)}$ are separated and $\|B\|>\tau$.}
\label{fig:encl_sep_B_large}
\end{center}
\end{figure}
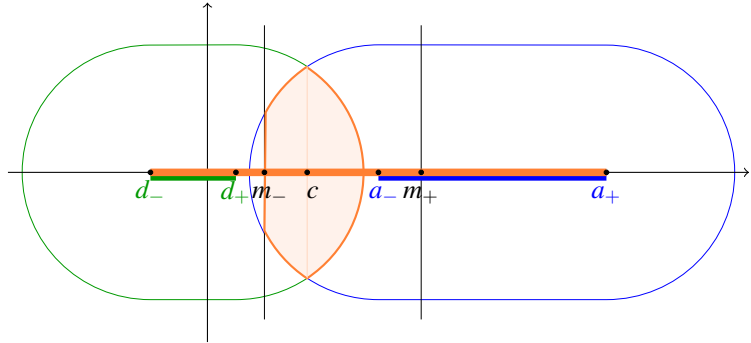

The next theorem shows that Proposition~\ref{pr:spec_incl} is sharp in the sense that given $[a_-,a_+]=\ol{W(A)}$, $[d_-,d_+]=\ol{W(D)}$
and $\|B\|$, the enclosures for the spectrum and the quadratic numerical range
of $S$ cannot be improved, i.e.\ an operator $S$ is constructed for which
equality holds in \eqref{realspecincl}
and \eqref{nonrealspecincl}, and in \eqref{specincla} if \eqref{e:auseinander}
is satisfied.

\begin{thm}\label{th:encl_sharp}
Let $a_+,a_-,d_+,d_-,b\in\dR$ such that $a_-\le a_+$, $d_-\le d_+$, and $b>0$.
Then there exist separable Hilbert spaces $\cH_\pm$, self-adjoint operators $A$ and $D$
in $\cH_+$ and $\cH_-$, respectively, and $B\in L(\cH_-,\cH_+)$ such that
\[
  \ol{W(A)} = [a_-,a_+], \qquad \ol{W(D)} = [d_-,d_+], \qquad \|B\| = b,
\]
and
\textup{(}with the notation from \eqref{defconstmpm}--\eqref{defconstcl}\textup{)}
the operator $S$ from \eqref{bom_Jsa} satisfies
\begin{equation}\label{equspecencl1}
\begin{aligned}
  \sigma(S) = \ol{W^2(S)} &= \Bigl(\mathcal{B}_b\bigl([a_-,a_+]\bigr) \cap \mathcal{B}_b\bigl([d_-,d_+]\bigr)
  \cap \{z\in\dC: m_-\le \Re z \le m_+\}\Bigr) \\[0.5ex]
  &\qquad \cup \bigl[\min\{a_-,d_-\},\max\{a_+,d_+\}\bigr]
\end{aligned}
\end{equation}
if $b>\ell$
and
\begin{equation}\label{equspecencl2}
  \sigma(S) = \ol{W^2(S)} = \Biggl[\min\{a_-,d_-\},\;
  c - \sqrt{\frac{\ell^2}{4}-b^2}\,\Biggr]
  \cup \Biggl[c + \sqrt{\frac{\ell^2}{4}-b^2},\;
  \max\{a_+,d_+\}\Biggr]
\end{equation}
if $b\le \ell$.
\end{thm}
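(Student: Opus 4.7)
The plan is to exhibit $A$, $D$ and $B$ as simultaneously diagonalisable multiplication operators, so that $S$ decomposes as a direct integral of $2\times 2$ matrices whose eigenvalues are computable explicitly; the spectrum of $S$ then equals the closure of the union of the fibre spectra, and what remains is to show that this union already covers the claimed right-hand sides.

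Concretely, set $X\defeq[a_-,a_+]\times[d_-,d_+]\times[0,b]$ equipped with Lebesgue measure (any degenerate factor is replaced by a unit point mass), take $\cH_+\defeq\cH_-\defeq L^2(X)$, and let $A$, $D$ and $B$ be multiplication by the first, second and third coordinates, respectively. A direct check gives $\ol{W(A)}=[a_-,a_+]$, $\ol{W(D)}=[d_-,d_+]$ and $\|B\|=b$. Under the identification $\cH_+\oplus\cH_-\cong L^2(X;\dC^2)$ the operator $S$ from \eqref{bom_Jsa} acts fibrewise as $S(\alpha,\delta,\beta)=\smallmat{\alpha}{\beta}{-\beta}{\delta}$, so standard direct-integral theory yields
\[
\sigma(S)=\overline{\bigcup_{(\alpha,\delta,\beta)\in X}\Bigl\{\tfrac{\alpha+\delta}{2}\pm\sqrt{\bigl(\tfrac{\alpha-\delta}{2}\bigr)^2-\beta^2}\,\Bigr\}}.
\]
By Proposition~\ref{pr:spec_incl} this set is automatically contained in the right-hand side of \eqref{equspecencl1} or \eqref{equspecencl2}, so only the reverse inclusion needs proof; once that is established, $\ol{W^2(S)}=\sigma(S)$ follows from the chain $\sigma(S)\subseteq\ol{W^2(S)}\subseteq{}$(RHS).

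For the reverse inclusion in the case $b>\ell$, the real interval $[\min\{a_-,d_-\},\max\{a_+,d_+\}]$ is covered by setting $\beta=0$, since the fibre eigenvalues are then just $\alpha$ and $\delta$. For a non-real $\lambda=x+iy$ with $y>0$ in the intersection on the right of \eqref{equspecencl1}, I would rewrite the three membership conditions, with $r\defeq\sqrt{b^2-y^2}$, as $[x-r,x+r]\cap[a_-,a_+]\neq\varnothing$, $[x-r,x+r]\cap[d_-,d_+]\neq\varnothing$, and $a_-+d_-\le 2x\le a_++d_+$. A short case analysis on the endpoints then produces $\alpha\in[x-r,x+r]\cap[a_-,a_+]$ and $\delta\in[x-r,x+r]\cap[d_-,d_+]$ with $\alpha+\delta=2x$; setting $\beta\defeq\sqrt{y^2+(\alpha-\delta)^2/4}\in[0,b]$ realises $\lambda$ as a fibre eigenvalue.

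The case $b\le\ell$ is easier: with $d_+<a_-$ say, one has $|\alpha-\delta|\ge 2\ell\ge 2b$ throughout $X$, so every fibre eigenvalue is real. A monotonicity argument in $\alpha$, $\delta$, $\beta$ analogous to the one in the proof of Proposition~\ref{pr:spec_incl} then shows that $\lambda_+$ sweeps out exactly $[c+\sqrt{\ell^2-b^2},a_+]$ and $\lambda_-$ exactly $[d_-,c-\sqrt{\ell^2-b^2}]$, matching \eqref{equspecencl2}. The main obstacle is the non-real coverage argument for $b>\ell$: verifying that the three geometric conditions on $\lambda$ simultaneously force the existence of the pair $(\alpha,\delta)$ with prescribed sum $2x$ and sufficiently small difference requires a careful bookkeeping of the position of $x$ relative to the four endpoints $a_\pm$, $d_\pm$.
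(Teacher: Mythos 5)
Your construction is sound and genuinely different from the paper's: the paper takes $\cH_\pm=\ell^2$ with diagonal $A,B,D$ and realises a \emph{dense sequence} of target points as eigenvalues of the $2\times2$ fibres $\smallmat{a_n}{b_n}{-b_n}{d_n}$, whereas you take multiplication operators over the whole parameter box $X$ and get every target point as a point of the fibre spectrum directly. Both proofs ultimately rest on the same algebra (realising $z$ as an eigenvalue of $\smallmat{\alpha}{\beta}{-\beta}{\delta}$ with $\alpha+\delta=2\Re z$), and both close the argument with the sandwich $\sigma(S)\subseteq\ol{W^2(S)}\subseteq{}$RHS. Your version buys a cleaner verification of $\|B\|=b$ (the paper needs a small contradiction argument at the end) at the cost of invoking the spectrum of a matrix-valued multiplication operator. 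The step you flag as the main obstacle is in fact harmless: with $I_1=[x-r,x+r]$, $I_2=[a_-,a_+]$ and $I_3=[2x-d_+,2x-d_-]$, your three membership conditions say precisely that $I_1,I_2,I_3$ are pairwise intersecting intervals (for $I_1\cap I_3$ use that $I_1$ is symmetric about $x$), so by the one-dimensional Helly property they have a common point $\alpha$; then $\delta\defeq 2x-\alpha\in[d_-,d_+]$ and $|\alpha-\delta|\le 2r$, which is exactly what you need.

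There is, however, one concrete gap: in the case $b>\ell$ your claim that the real interval $[\min\{a_-,d_-\},\max\{a_+,d_+\}]$ is covered by setting $\beta=0$ fails when $\ol{W(A)}\cap\ol{W(D)}=\varnothing$. With $\beta=0$ the fibre eigenvalues are just $\alpha$ and $\delta$, so they only sweep $[a_-,a_+]\cup[d_-,d_+]$ and miss the gap, say $(d_+,a_-)$; and a point $x$ there with $\dist(x,[a_-,a_+])$ close to $2\ell>b$ is not in the closure of the non-real part either, so it is not rescued by your Helly argument. The fix is the one the paper uses for exactly this subcase: for $z\in(d_+,a_-)$ take $\alpha=a_-$, $\delta=d_+$ and $\beta=\sqrt{\ell^2-(z-c)^2}\in(0,\ell]\subseteq[0,b]$, which gives the fibre eigenvalues $c\pm|z-c|$, one of which is $z$. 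Finally, a bookkeeping remark: your formula $\sqrt{\ell^2-b^2}$ in the case $b\le\ell$ is the one consistent with \eqref{specincla} and the definition of $\ell$ in \eqref{defconstcl}; the $\sqrt{\ell^2/4-b^2}$ appearing in \eqref{equspecencl2} (and the factor $\ell/2$ in the paper's own proof) is an internal inconsistency of the paper, so "matching \eqref{equspecencl2}" should be read modulo that factor of two.
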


\begin{proof}
Let $\cH_+=\cH_-=\ell^2$ and define the operators
\[
  A = \diag(a_1,a_2,\ldots), \qquad B = \diag(b_1,b_2,\ldots), \qquad
  D = \diag(d_1,d_2,\ldots)
\]
with numbers
\begin{equation}\label{anbndn_incl}
  a_n \in [a_-,a_+], \quad
  b_n \in [0,b], \quad
  d_n \in [d_-,d_+],
\end{equation}
$n\in\dN$, which are chosen later.
Let $z_n=x_n+iy_n$, $n\in\dN$, be such that $\{z_n:n\in\dN\}$
is a dense subset of the right-hand sides of \eqref{equspecencl1}
or \eqref{equspecencl2}, respectively.
Below we construct $a_n,b_n,d_n$
such that
\begin{equation}\label{znandnbn}
  z_n=w_+(a_n,b_n,d_n) \qquad\text{or}\qquad z_n=w_-(a_n,b_n,d_n),
\end{equation}
where
\begin{equation}\label{wpmandnbn}
  w_\pm(a_n,b_n,d_n) \defeq \frac{a_n+d_n}{2}
  \pm \sqrt{\Bigl(\frac{a_n-d_n}{2}\Bigr)^2-|b_n|^2}.
\end{equation}
Since then $z_n$ is an eigenvalue of $S$ and $\sigma(S)$ is closed, this,
together with the enclosures in Proposition~\ref{pr:spec_incl},
shows equality in \eqref{equspecencl1} and \eqref{equspecencl2}.

Let us first consider the case when $z_n\notin\dR$.
Then $z_n$ is in the right-hand side of \eqref{equspecencl1}.
If $x_n\in[a_-,a_+]\cap[d_-,d_+]$, then set
\[
  a_n \defeq d_n \defeq x_n, \qquad b_n \defeq |y_n|.
\]
Clearly,
\[
  a_n\in[a_-,a_+], \quad d_n\in[d_-,d_+], \quad
  b_n = |y_n| = \dist(z_n,[a_-,a_+]) \le b,
\]
and \eqref{znandnbn} is satisfied.
Now assume that $x_n\notin[a_-,a_+]\cap[d_-,d_+]$.
Without loss of generality we can assume that
\begin{equation}\label{wlogequal}
  \dist\bigl(x_n,[a_-,a_+]\bigr) \le \dist\bigl(x_n,[d_-,d_+]\bigr),
\end{equation}
which implies that $x_n\notin[d_-,d_+]$.
Let us consider the case when $x_n < d_-$; the case $x_n > d_+$ is analogous.
Set
\[
  a_n = 2x_n-d_-, \qquad d_n = d_-, \qquad
  b_n = \sqrt{y_n^2+(x_n-d_-)^2}.
\]
Clearly, $d_n\in[d_-,d_+]$.  From $x_n\ge m_-$ we obtain that
\[
  a_n = 2x_n - d_- \ge 2m_- - d_- = 2\frac{a_-+d_-}{2} - d_- = a_-.
\]
If $x_n \le a_+$, then $a_n = x_n+(x_n-d_-) < x_n \le a_+$ and hence $a_n\in[a_-,a_+]$.
If $x_n > a_+$, then the inequality in \eqref{wlogequal}
is equivalent to $x_n-a_+ \le d_--x_n$, which implies that
\[
  a_n = 2x_n - d_- \le a_+;
\]
hence also in this case we have $a_n\in[a_-,a_+]$.
Moreover,
\[
  b_n = \sqrt{y_n^2+(x_n-d_-)^2}
  = |z_n-d_-| = \dist\bigl(z_n,[d_-,d_+]\bigr) \le b.
\]
It is easy to check that \eqref{znandnbn} is satisfied.

Next we consider the case when $z_n\in\dR$.
If $z_n\in[a_-,a_+]$, then choose $a_n=z_n$, $d_n$ arbitrary in $[d_-,d_+]$
and $b_n=0$.  Then
\[
  w_+(a_n,b_n,d_n) = \max\{z_n,d_n\}, \qquad w_-(a_n,b_n,d_n) = \min\{z_n,d_n\},
\]
and hence \eqref{znandnbn} holds.
The case when $z_n\in[d_-,d_+]$ is similar.
If $[a_-,a_+]\cap[d_-,d_+]\ne\varnothing$, then all cases of real $z_n$ are covered.
Finally, assume that $[a_-,a_+]\cap[d_-,d_+]=\varnothing$
and $z_n\notin[a_-,a_+]\cup[d_-,d_+]$.
Without loss of generality we can assume that $d_+<a_-$;
then $z_n\in(d_+,a_-)$.
Let us consider the case when $z_n \ge c = \frac{1}{2}(d_++a_-)$;
the other case is similar.  Set
\[
  a_n = a_-, \qquad d_n = d_+, \qquad
  b_n = \sqrt{\Bigl(\frac{\ell}{2}\Bigr)^2-(z_n-c)^2}.
\]
It is easy to check that $z_n=w_+(a_n,b_n,d_n)$.
If $b>\frac{1}{2}\dist([a_-,a_+],[d_-,d_+])=\frac{\ell}{2}$,
then $b_n\le \frac{\ell}{2}<b$.
If $b\le\frac{\ell}{2}$, then the form of the right-hand side
of \eqref{equspecencl2} implies that
\[
  z_n \ge c + \sqrt{\Bigl(\frac{\ell}{2}\Bigr)^2-b^2},
\]
which yields
\[
  b_n \le \sqrt{\Bigl(\frac{\ell}{2}\Bigr)^2 - \biggl(\Bigl(\frac{\ell}{2}\Bigr)^2-b^2\biggr)}
  = b.
\]

The relations in \eqref{anbndn_incl} imply that
the operators $A$, $B$ and $D$ are bounded with $\|B\|\le b$.
If we had $\|B\|<b$, then we would obtain a strictly smaller
enclosure for the spectrum from Proposition~\ref{pr:spec_incl},
which contradicts the already obtained equality
in \eqref{equspecencl1} or \eqref{equspecencl2}, respectively.
\end{proof}

\section{Gershgorin-type enclosure for the spectrum of block operator matrices}
\label{smalll}

\noindent
In this section we provide another spectral enclosure for the non-real spectrum
of the block operator matrix
\begin{equation}\label{bom_gersh_1}
  S = \mymatrix{ A & B \\[0.5ex] -B^* & D },\qquad\dom S = \dom A\oplus\dom D.
\end{equation}
As already indicated by \eqref{bom_gersh_1}, we also allow unbounded entries $A$ and $D$.
The operator $B$ remains bounded in our considerations.
The result has similarities with Gershgorin's circle theorem for matrices \cite{Ger31}
and block operator matrices \cite{RT18,Salas99,Tretter08,DMS15} since we show that the non-real spectrum
of the operator matrix $S$ is contained in the union of a family of closed balls,
centred along parts of the spectrum of the block $A$ in the diagonal of $S$ (see \eqref{e:general}).
To formulate the result, for a closed set $M\subseteq\R$ define the
following class of continuous functions:
\begin{equation}\label{defC+}
  C^+(M) = \Bigl\{f\in C(M) : f(t)>0 \;\;\text{for}\;\;t\in M,\;\sup_{t\in M}f(t) < \infty,\;
  \inf_{|t|\ge 1}\,|t|f(t) > 0\Bigr\}.
\end{equation}
The last two conditions obviously only matter if $M$ is unbounded.
If $M$ is compact, then $C^+(M)$ is the set of positive continuous functions on $M$.
Note that any positive constant function is contained in $C^+(M)$
and also $|t|^{-1}\in C^+(M)$ if $0\notin M$.

\begin{thm}\label{t:gersh_f}
Given a Hilbert space $\cH=\cH_+\oplus\cH_-$, consider the block operator matrix $S$
in \eqref{bom_gersh_1}, where $B\in L(\cH_-,\cH_+)$ and $A$ and $D$ are self-adjoint operators
in $\calH_+$ and $\calH_-$, respectively.
Then, for any $f\in C^+(\sigma(A))$ and $g\in C^+(\sigma(D))$ we have
\begin{equation}\label{e:general}
  \sigma(S)\setminus\R\,\subseteq\,
  \left(\bigcup_{t\in\sigma(A)}\mathcal{B}_{f(t)^{-1}\|f(A)B\|}(t)\right)\,\cap\,
  \left(\bigcup_{s\in\sigma(D)}\mathcal{B}_{g(s)^{-1}\|g(D)B^*\|}(s)\right).
\end{equation}
\end{thm}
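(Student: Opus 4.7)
The plan is to reduce the theorem to the enclosure \eqref{encl_intro} (established in Theorem~\ref{t:new}), which asserts that $\lambda\in\sigma(S)\setminus\R$ forces $\|(A-\lambda)^{-1}B\|\ge 1$ and $\|(D-\lambda)^{-1}B^*\|\ge 1$ \emph{simultaneously}. The two factors on the right-hand side of \eqref{e:general} are completely symmetric under $(A,B,f)\leftrightarrow(D,B^*,g)$, so it is enough to prove the following implication for arbitrary $\lambda\in\C\setminus\R$: if $f\in C^+(\sigma(A))$ and $\|(A-\lambda)^{-1}B\|\ge 1$, then $\lambda\in\bigcup_{t\in\sigma(A)}\mathcal{B}_{f(t)^{-1}\|f(A)B\|}(t)$.

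To prove this implication, I would introduce the auxiliary function $h\colon\sigma(A)\to\C$ defined by
\[
  h(t) \defeq \frac{1}{(t-\lambda)f(t)},
\]
which is continuous since $f>0$ on $\sigma(A)\subseteq\R$ and $\lambda\notin\R$. The place where the specific conditions defining $C^+(M)$ in \eqref{defC+} enter is the verification that $|h|$ attains its supremum on $\sigma(A)$: on any compact part of $\sigma(A)$, continuity and positivity of $f$ give boundedness of $h$, while the condition $\inf_{|t|\ge 1}|t|f(t)>0$ yields $|h(t)|\le C/|t|\to 0$ as $|t|\to\infty$ in $\sigma(A)$. Hence $|h|$ is continuous and vanishes at infinity on the closed set $\sigma(A)$, so its supremum is attained at some $t_0\in\sigma(A)$.

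Using the functional calculus for the self-adjoint operator $A$, the pointwise identity $h(t)f(t)=(t-\lambda)^{-1}$ on $\sigma(A)$ lifts to the operator identity $h(A)f(A)=(A-\lambda)^{-1}$ (here $f(A)$ is bounded because $f$ is bounded on $\sigma(A)$, and $h(A)$ is bounded by the preceding step). Submultiplicativity of the operator norm then gives
\[
  1 \le \|(A-\lambda)^{-1}B\|
  = \|h(A)f(A)B\|
  \le \|h(A)\|\,\|f(A)B\|
  = \frac{\|f(A)B\|}{|t_0-\lambda|\,f(t_0)},
\]
which rearranges to $|t_0-\lambda|\le f(t_0)^{-1}\|f(A)B\|$, i.e.\ $\lambda\in\mathcal{B}_{f(t_0)^{-1}\|f(A)B\|}(t_0)$. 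Applying the same reasoning to $(D,B^*,g)$ and intersecting the two inclusions yields \eqref{e:general}. The only nontrivial step is the bookkeeping in the second paragraph—checking that the tailored conditions in $C^+(\sigma(A))$ force $h\in C_0(\sigma(A))$ so that the supremum is actually attained; once this is done the rest is a one-line norm estimate.
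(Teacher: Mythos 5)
Your overall strategy is exactly the paper's: reduce to Theorem~\ref{t:new} and then show that $\|(A-\la)^{-1}B\|\ge 1$ forces $\la$ into the union of balls (this is the content of Proposition~\ref{p:late}, implication (a)$\,\Rightarrow\,$(b), applied with $T=A$, $V=B$). The operator identity $h(A)f(A)=(A-\la)^{-1}$ and the norm estimate $1\le\|h(A)\|\,\|f(A)B\|$ are fine. The gap is in your second paragraph: the claim that $|h(t)|\le C/|t|\to 0$ as $|t|\to\infty$ is false. From $\inf_{|t|\ge 1}|t|f(t)=c>0$ you only get $|t-\la|f(t)\ge\tfrac12|t|\cdot c|t|^{-1}=c/2$ for large $|t|$, i.e.\ $|h(t)|\le 2/c$; the definition of $C^+$ permits $f(t)\sim|t|^{-1}$, and then $|h(t)|=\frac{1}{|t-\la|f(t)}$ tends to a \emph{positive} limit at infinity rather than to $0$. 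Concretely, take $\sigma(A)=\{2^n:n\in\N\}$, $f(t)=|t|^{-1}$, $\la=i$: then $|h(2^n)|=2^n/\sqrt{4^n+1}$ increases to $1$ and the supremum of $|h|$ is not attained. Consequently your $t_0$ need not exist when $A$ is unbounded, and the one-line estimate only yields $\inf_{t\in\sigma(A)}|t-\la|f(t)\le\|f(A)B\|$; if that infimum equals $\|f(A)B\|$ and is approached only at infinity, no admissible centre $t_0$ is produced and the argument does not close. (For bounded $A$, where $\sigma(A)$ is compact, your proof is complete.)

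This is precisely the difficulty the paper's Proposition~\ref{p:late} is engineered to overcome: instead of $h$ it works with $g_n(t)=f(t)|t-\la|^{1+1/n}$, whose extra factor $|t-\la|^{1/n}$ forces $|g_n(t)|\to\infty$ as $|t|\to\infty$, so that $\inf_{\sigma_B(A)}|g_n|$ \emph{is} attained at some $t_n$; Lemma~\ref{l:helpy} then controls the error $\big\||T-\la|^{-1-1/n}V\big\|-\|(T-\la)^{-1}V\|$, and a dichotomy (either $|t_n-\la|$ stays large for some $n$, or $(t_n)$ has a convergent subsequence) delivers the required centre in the limit. To repair your proof you would need to import this regularization (or an equivalent compactness device); restricting attention to bounded $A$ and $D$ is not an option, since the theorem explicitly allows unbounded diagonal entries.
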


\begin{rem}
If we set $f = g = 1$ in Theorem~\ref{t:gersh_f}, then the spectral inclusion \eqref{e:general} becomes
\begin{equation}\label{e:bpt}
  \sigma(S)\setminus\R\,\subseteq\,\mathcal{B}_{\|B\|}(\sigma(A))\,\cap\,\mathcal{B}_{\|B\|}(\sigma(D)),
\end{equation}
which was already proved in \cite[Theorem~3.5]{BPT13}.
\end{rem}

Theorem~\ref{t:gersh_f} will follow from Theorem \ref{t:new} below,
which is an improvement of \cite[Theorem~3.5]{BPT13}.
The spectral inclusion \eqref{e:bpt} means that a non-real point in the
spectrum of $S$ satisfies $\dist(\la,\sigma(A))\le\|B\|$ and $\dist(\la,\sigma(D))\le\|B\|$.
This is equivalent to $\|(A-\la)^{-1}\|\|B\|\ge 1$ and $\|(D-\la)^{-1}\|\|B^*\|\ge 1$.
Hence, the spectral enclosure given in \eqref{e:best} is sharper.

\begin{thm}\label{t:new}
Given a Hilbert space $\cH=\cH_+\oplus\cH_-$, consider the block operator matrix $S$
in \eqref{bom_gersh_1}, where $B\in L(\cH_-,\cH_+)$ and $A$ and $D$
are self-adjoint operators in $\calH_+$ and $\calH_-$, respectively. Then
\begin{equation}\label{e:best}
  \sigma(S)\setminus\R\,\subseteq\,\bigl\{\la\in\C\setminus\R : \|(A-\la)^{-1}B\|\ge 1\;
  \text{ and }\;\|(D-\la)^{-1}B^*\|\ge 1\bigr\}.
\end{equation}
Moreover, given $\la\in\C\setminus\R$
then
\begin{alignat}{2}
  \|(S-\la)^{-1}\| &\le \frac{1 + \|(A-\la)^{-1}B\| + \|(A-\la)^{-1}B\|^2}{|\Im\la|\cdot(1-\|(A-\la)^{-1}B\|^2)}\qquad
  && \text{if $\|(A-\la)^{-1}B\| < 1$},\label{e:resolvent1}\\[1ex]
  \|(S-\la)^{-1}\| &\le \frac {1 + \|(D-\la)^{-1}B^*\| + \|(D-\la)^{-1}B^*\|^2}{|\Im\la|\cdot(1-\|(D-\la)^{-1}B^*\|^2)}\qquad
  && \text{if $\|(D-\la)^{-1}B^*\| < 1$}.\label{e:resolvent2}
\end{alignat}
\end{thm}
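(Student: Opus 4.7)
The plan is to exploit the Schur complements
$S_1(\la) = A - \la + B(D-\la)^{-1}B^*$ and
$S_2(\la) = D - \la + B^*(A-\la)^{-1}B$ of $S$, which are well defined
for every $\la\in\C\setminus\R$ since $A$ and $D$ are self-adjoint
(so $\C\setminus\R\subseteq\rho(A)\cap\rho(D)$). I would combine
Lemma~\ref{l:schurli}, which relates $\sigma(S)$ to invertibility of
$S_1(\la)$ or $S_2(\la)$, with Lemma~\ref{l:kato}, which controls the
resolvent of a bounded perturbation of a self-adjoint operator via its
numerical range.

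The key technical step is locating the numerical range of $S_2(\la)$.
For $x\in\cH_-$ with $\|x\|=1$, the spectral theorem applied to $A$ yields
$\Im\bigl((A-\la)^{-1}Bx,Bx\bigr) = \Im\la\cdot\|(A-\la)^{-1}Bx\|^2$, hence
\[
  \Im\bigl(S_2(\la)x,x\bigr)
  = -\Im\la + \Im\la\cdot\|(A-\la)^{-1}Bx\|^2
  = \Im\la\bigl(\|(A-\la)^{-1}Bx\|^2 - 1\bigr).
\]
Setting $\tau := \|(A-\la)^{-1}B\|$ and assuming $\tau<1$, this forces
$|(S_2(\la)x,x)|\ge |\Im\la|(1-\tau^2)$ for every unit vector $x\in\cH_-$,
so $\dist\bigl(0,\ol{W(S_2(\la))}\bigr)\ge |\Im\la|(1-\tau^2)>0$.
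Since $S_2(\la)$ is a bounded perturbation of the self-adjoint operator
$D - \Re\la$, Lemma~\ref{l:kato} gives $0\in\rho(S_2(\la))$ and
$\|S_2(\la)^{-1}\|\le 1/(|\Im\la|(1-\tau^2))$, and an appeal to
Lemma~\ref{l:schurli}(ii) yields $\la\in\rho(S)$. An analogous
computation with $S_1(\la)$ and $\|(D-\la)^{-1}B^*\|$ produces the
symmetric statement. Taking contrapositives of both implications yields
the enclosure~\eqref{e:best}.

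For the resolvent estimates, I would use the factorisation of
$(S-\la)^{-1}$ through $S_2(\la)$, the counterpart of the formula stated
in Lemma~\ref{l:schurli}:
\[
  (S-\la)^{-1}
  = \bmat{I}{-(A-\la)^{-1}B}{0}{I}
    \bmat{(A-\la)^{-1}}{0}{0}{S_2(\la)^{-1}}
    \bmat{I}{0}{B^*(A-\la)^{-1}}{I}.
\]
Applied to $y=\bvek{y_+}{y_-}$ this gives
$x_- = S_2(\la)^{-1}\bigl(y_- + B^*(A-\la)^{-1}y_+\bigr)$ and
$x_+ = (A-\la)^{-1}y_+ - (A-\la)^{-1}Bx_-$. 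Using
$\|(A-\la)^{-1}\|\le 1/|\Im\la|$, the previously obtained bound on
$\|S_2(\la)^{-1}\|$, and the identity
$\|B^*(A-\la)^{-1}\| = \|(A-\la)^{-1}B\| = \tau$ (which follows from the
spectral theorem applied to $|A-\la|^2$), I would bound $\|x_+\|$ and
$\|x_-\|$ separately in terms of $\|y_+\|$ and $\|y_-\|$ and combine
these estimates for $\|x\|^2$ to recover the constant
$(1+\tau+\tau^2)/(|\Im\la|(1-\tau^2))$ in \eqref{e:resolvent1}.
The estimate \eqref{e:resolvent2} follows in exactly the same way from
the factorisation via $S_1(\la)$ already stated in Lemma~\ref{l:schurli}.

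The main obstacle is the imaginary-part identity for $B^*(A-\la)^{-1}B$
and the bookkeeping needed to extract the precise numerator
$1+\tau+\tau^2$ from the block factorisation; once the identity
$\Im(S_2(\la)x,x)=\Im\la(\|(A-\la)^{-1}Bx\|^2-\|x\|^2)$ is in place,
everything else is a direct application of Lemmas~\ref{l:kato}
and~\ref{l:schurli}.
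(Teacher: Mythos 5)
Your proposal is correct and follows essentially the same route as the paper: the spectral enclosure is obtained by showing that the numerical range of a Schur complement stays away from zero (your identity $\Im(S_2(\la)x,x)=\Im\la\,(\|(A-\la)^{-1}Bx\|^2-\|x\|^2)$ is exactly the paper's computation $S_j(\la)-S_j(\ol\la)=(\la-\ol\la)(T^*T-I)$, read off the other Schur complement), and then Lemma~\ref{l:kato} and Lemma~\ref{l:schurli} are applied exactly as in the paper. The only genuine difference is in the resolvent bound: the paper estimates the operator norms of the three factors in the triangular factorisation (via $\|L\|^2=\|L^*L\|\le 1+\tau+\tau^2$), whereas you apply the factorisation to a vector and bound the two components; carried out, your bookkeeping gives $\|x_+\|\le\frac{\|y_+\|+\tau\|y_-\|}{|\Im\la|(1-\tau^2)}$ and $\|x_-\|\le\frac{\tau\|y_+\|+\|y_-\|}{|\Im\la|(1-\tau^2)}$, hence $\|(S-\la)^{-1}\|\le\frac{1+\tau}{|\Im\la|(1-\tau^2)}$, which is in fact slightly sharper than the stated constant $\frac{1+\tau+\tau^2}{|\Im\la|(1-\tau^2)}$ and so certainly implies \eqref{e:resolvent1}--\eqref{e:resolvent2}.
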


\bigskip

Before we prove Theorem~\ref{t:new}, we provide a couple of remarks and an example.

\begin{rem}
(a) The same conclusions as in Theorem \ref{t:new} hold, if we drop the boundedness assumption on $B$
and, instead, assume that $B$ is $D$-bounded with $D$-bound less than one and $B^*$ is $A$-bounded
with $A$-bound less than one.  The arguments in the proof are essentially the same.

\medskip
(b) Note that both $\sigma(S)\setminus\R$ and the right-hand side of \eqref{e:best}
are sets which are symmetric with respect to the real axis.

\medskip
(c) There is another spectral enclosure for the operator matrix $S$ that results
from a relatively simple argument (see \cite[Theorem~1.1]{DMS15} or \cite[Lemma~5.2\,(ii)]{AMT10}).
Consider the second Schur complement $S_2(\la) = D-\la + B^*(A-\la)^{-1}B$ for $\la\in\C\setminus\R$.
Applying $(D-\la)^{-1}$ from the right and from the left, respectively, we obtain
\begin{align*}
  S_2(\la)(D-\la)^{-1} &= I + B^*(A-\la)^{-1}B(D-\la)^{-1}
  \qquad\text{and} \\
  (D-\la)^{-1}S_2(\la) &= I + (D-\la)^{-1}B^*(A-\la)^{-1}B|_{\dom D}.
\end{align*}
That is, if one of
\[
  N_S(\la) \defeq \|B^*(A-\la)^{-1}B(D-\la)^{-1}\|
  \qquad\text{or}\qquad
  N_S(\ol\la) = \|(D-\la)^{-1}B^*(A-\la)^{-1}B\|
\]
is less than $1$, then the Schur complement $S_2(\la)$ is boundedly invertible and so $\la\in\rho(S)$.
Hence,
\[
  \sigma(S)\,\subseteq\,\big\{\la\in\rho(D) : \;N_S(\la)\ge 1\;\text{ and }\; N_S(\ol\la)\ge 1\big\}.
\]
A similar reasoning applies to the first Schur complement $S_1(\la) = A-\la + B(D-\la)^{-1}B^*$ and gives
\[
  \sigma(S)\,\subseteq\,\big\{\la\in\rho(A): \;M_S(\la)\ge 1\;\text{ and }\; M_S(\ol\la)\ge 1\big\},
\]
where $M_S(\la) \defeq \|B(D-\la)^{-1}B^*(A-\la)^{-1}\|$.  This implies that
\begin{equation}\label{e:neumann}
  \sigma(S)\setminus\R\,\subseteq\,\big\{\la\in\C\setminus\R :
  \min\{N_S(\la),N_S(\ol\la),M_S(\la),M_S(\ol\la)\}\ge 1\big\}.
\end{equation}

\smallskip
(d) The spectral enclosures \eqref{e:best} and \eqref{e:neumann} are independent of each other,
meaning that, in general, none of the corresponding sets on the right-hand sides of the
two relations contains the other. Consequently, if we intersect the right-hand side of the
already known enclosure \eqref{e:neumann} with the new one \eqref{e:best},
we obtain a better bound for the non-real spectrum of $S$, as illustrated in
Example~\ref{ex:mathematica} below.
However, Example \ref{ex:gershgorin} shows a situation, where \eqref{e:best} is in fact strictly better
than \eqref{e:neumann}.

\medskip
(e) Both spectral enclosures \eqref{e:best} and \eqref{e:neumann} require complete knowledge
about the functions $\|(A-\cdot)^{-1}B\|$, $\|(D-\cdot)^{-1}B^*\|$, $N_S$ and $M_S$.
In contrast, Theorem \ref{t:gersh_f} basically only requires knowledge about $\sigma(A)$ and $\sigma(D)$
and is therefore better suited for computations.
\end{rem}

\begin{ex}\label{ex:mathematica}
We let $\calH_- = \calH_+ = \C^2$ and
\[
  A = \mymatrix{2 & 1+i \\[0.5ex] 1-i & -1}, \qquad
  D = \mymatrix{1 & 0 \\[0.5ex] 0 & -5}, \qquad
  B = \mymatrix{i & 1+\frac{i}{2} \\[0.5ex] -1-i & -\frac{2}{5}}.
\]
The four eigenvalues of $S\in\C^{4\times 4}$ are depicted as black dots in the figure below.
Note that two of them are real. They are (approximately) $-4.73166$, $2.38898$, $-0.328657\pm 1.03244\,i$.
The region from \eqref{e:neumann} is bounded by the three red curves,
while the two blue curves bound the region on the right-hand side of \eqref{e:best}.
The orange filled region is the intersection of the two enclosures.
\begin{figure}[ht]
\begin{center}
\includegraphics[width=9cm]{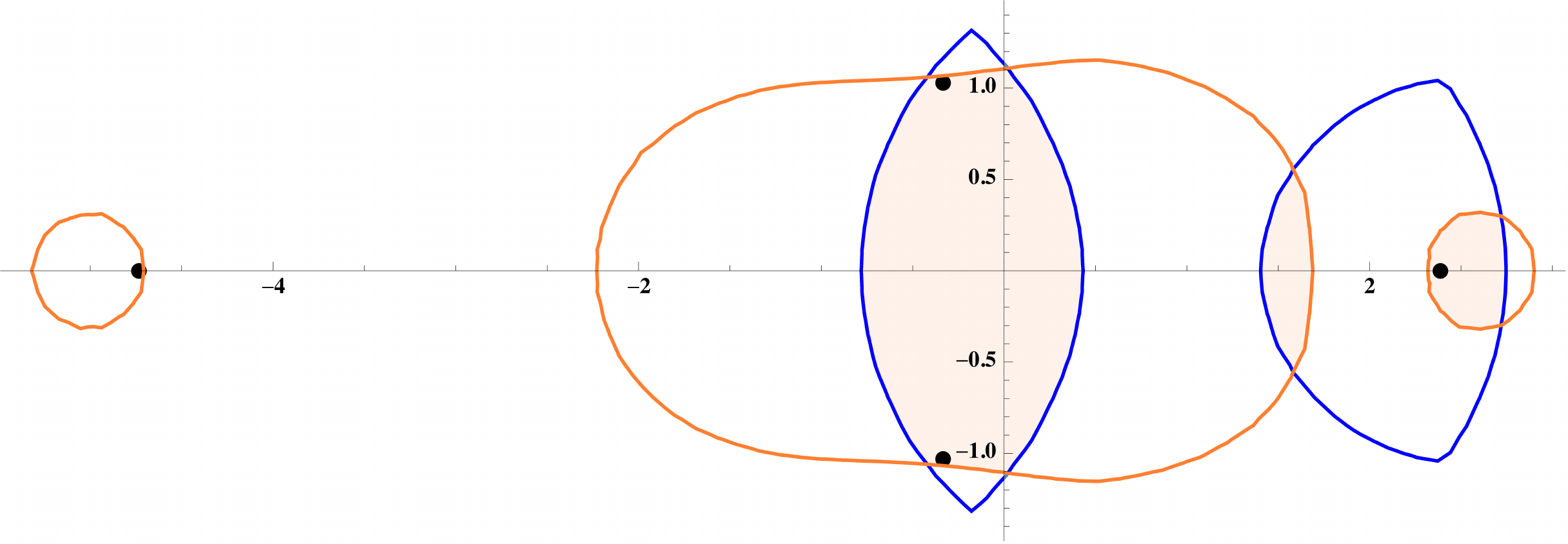}
\end{center}
\caption{The spectral enclosures \eqref{e:best}, bounded with blue curves,
and \eqref{e:neumann}, bounded with orange curves, for the matrix in Example~\ref{ex:mathematica}.}
\end{figure}
\end{ex}

\begin{proof}[Proof of Theorem~\rmref{t:new}]
We use the first Schur complement $S_1$ of the block operator matrix $S$ in \eqref{bom_gersh_1},  which is given by
\begin{align*}
  S_1(\lambda) = A - \lambda + B(D-\lambda)^{-1}B^*,\qquad\dom S_1(\la) = \dom A,
\end{align*}
for $\lambda\in\rho(D)$; see \eqref{eq:Schurcomp}.

For $\la\in\C\setminus\R$ we have $S_1(\la)^* = S_1(\ol\la)$ and, setting $T \defeq (D-\la)^{-1}B^*$ 
we obtain
\begin{align*}
  S_1(\la)-S_1(\ol\la)
  = \ol\la-\la + B\big[(D-\la)^{-1}-(D-\ol\la)^{-1}\big]B^* = (\la-\ol\la)(T^*T - I).
\end{align*}
If $\Im\la > 0$, then for arbitrary $h\in\dom A$ with $\|h\|=1$ we have
\begin{equation}\label{ImS1hhl0}
  \Im(S_1(\la)h,h)
  = \frac 1{2i}\big((S_1(\la)-S_1(\ol\la))h,h\big) = (\Im\la)\cdot(\|Th\|^2-1)\,\le\,(\Im\la)(\|T\|^2-1).
\end{equation}
In particular, if $\|T\|<1$, then $0\notin\ol{W(S_1(\lambda))}$ and
therefore $0\notin\sigma(S_1(\lambda))$; see Lemma \ref{l:kato}.
Now Lemma~\ref{l:schurli} implies that $\lambda\notin\sigma(S)$.
A similar reasoning applies to the case $\Im\la < 0$.
This proves that
\[
  \sigma(S)\setminus\R\,\subseteq\,\big\{\la\in\C\setminus\R : \|(D-\la)^{-1}B^*\|\ge 1\big\}.
\]
Applying the same arguments to the second Schur complement $S_2$, we obtain
\[
  \sigma(S)\setminus\R\,\subseteq\,\big\{\la\in\C\setminus\R : \|(A-\la)^{-1}B\|\ge 1\big\},
\]
which completes the proof of the inclusion \eqref{e:best}.

Note also that \eqref{ImS1hhl0} implies that
\begin{equation}\label{e:dist}
  \dist\big(0,\ol{W(S_1(\la))}\big)\,\ge\,|\Im\la|(1-\|T\|^2)
\end{equation}
if $\|T\|<1$.
Let us now prove the estimate \eqref{e:resolvent2} for the resolvent of $S$.
For this, let $\la\in\C\setminus\R$ such that $\|T\|<1$,
where $T = (D-\la)^{-1}B^*$ as above. By Lemma \ref{l:schurli} we have
\begin{equation}\label{e:resi}
  (S - \la)^{-1}
  = \bmat{I}{0}{(D-\la)^{-1}B^*}{I}  \bmat{S_1(\la)^{-1}}{0}{0}{(D-\la)^{-1}}
  \bmat{I}{-B(D-\la)^{-1}}{0}{I}.
\end{equation}
Denote the first factor by $L$.  Then
\begin{align*}
  \|L\|^2 &= \|L^*L\|
  = \left\|\bmat{I}{T^*}{0}{I}\bmat{I}{0}{T}{I}\right\|
  = \left\|\bmat{I+T^*T}{T^*}{T}{I}\right\| \\[0.5ex]
  &\le\,\left\|\bmat{I}{0}{0}{I}\right\| + \left\|\bmat{T^*T}{0}{0}{0}\right\|
  + \left\|\bmat{0}{T^*}{T}{0}\right\| \\[0.5ex]
  &= 1 + \|T\|^2 + \|T\|.
\end{align*}
Since $(D-\la)^{-1}$ is normal, we have
\[
  \|B(D-\la)^{-1}\| = \|(D-\ol\la)^{-1}B^*\| = \|(D-\la)^{-1}B^*\| = \|T\|,
\]
which implies that for the last factor in \eqref{e:resi} we have
the same estimate as for the first one.
It remains to estimate the middle factor $M$ in \eqref{e:resi}.
To this end, note that Lemma \ref{l:kato} and \eqref{e:dist} yield
\[
  \|S_1(\la)^{-1}\| \le \dist\bigl(0,\ol{W(S_1(\la))}\bigr)^{-1}
  \le |\Im\la|^{-1}(1-\|T\|^2)^{-1}.
\]
Since $\|(D-\la)^{-1}\|\le|\Im\la|^{-1}$, we obtain
\[
  \|M\| = \max\big\{\|S_1(\la)^{-1}\|,\,\|(D-\la)^{-1}\|\big\}\,\le\,|\Im\la|^{-1}(1-\|T\|^2)^{-1}.
\]
Hence
\[
  \|(S-\la)^{-1}\| \le \frac{1+\|T\|+\|T\|^2}{|\Im\la|(1-\|T\|^2)}\,.
\]
The estimate \eqref{e:resolvent1} can be derived similarly by using the second Schur complement.
\end{proof}



In the following we are going to show that Theorem \ref{t:gersh_f} is just a consequence of Theorem \ref{t:new}. However, since the enclosure in Theorem \ref{t:gersh_f} is expressed in terms of the spectral quantities of $A$ and $D$, compared with Theorem \ref{t:new} it gives a more intuitive and explicit insight into the location of the spectrum of $S$.

\begin{lem}\label{l:helpy}
Let $c > 0$. Then there exists $C > 0$ \braces{depending on $c$} such that
\[
  \big|x^{1+1/n} - x\big| \le \frac{C}{n}
\]
for all $x\in [0,c]$ and all $n\in\N$.
\end{lem}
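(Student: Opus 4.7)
The plan is to factor $x^{1+1/n}-x = x(x^{1/n}-1)$ and bound this product uniformly on $[0,c]$. Since $x^{1/n}-1$ is negative for $x<1$ and positive for $x>1$, I would split into the subintervals $[0,\min\{1,c\}]$ and (if $c>1$) $[1,c]$ and treat them separately. In each subinterval a different technique is natural; putting the two bounds together will yield the constant $C$.

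On $[1,c]$ (only relevant when $c>1$), I would use the elementary inequality $e^t-1 \le t\,e^t$ for $t\ge 0$ with $t=(\log x)/n$, giving
\[
  0\,\le\,x^{1/n}-1\,=\,e^{(\log x)/n}-1\,\le\,\frac{\log x}{n}\cdot x^{1/n}.
\]
Multiplying by $x$ and using $x\le c$ and $x^{1/n}\le c^{1/n}\le c$ produces
\[
  0\,\le\, x(x^{1/n}-1)\,\le\,\frac{x^{1+1/n}\log x}{n}\,\le\,\frac{c^2\log c}{n}.
\]

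On $[0,\min\{1,c\}]$, the function $f_n(x)\defeq x-x^{1+1/n}$ is nonnegative and vanishes at the endpoints $0$ and $1$. A direct calculation gives $f_n'(x) = 1-(1+1/n)\,x^{1/n}$, so $f_n$ attains its maximum at $x^\ast = \bigl(\tfrac{n}{n+1}\bigr)^n \le 1$, where
\[
  f_n(x^\ast)\,=\,x^\ast\Bigl(1-\tfrac{n}{n+1}\Bigr)\,=\,\frac{x^\ast}{n+1}\,\le\,\frac{1}{n}.
\]
Hence $|x^{1+1/n}-x|\le 1/n$ on this subinterval.

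Combining the two cases, the constant $C\defeq \max\{1,\,c^2\log(1+c)\}$ (or any similar expression covering both subcases simultaneously, including $c\le 1$) yields the desired inequality for all $x\in[0,c]$ and $n\in\N$. The argument is elementary; the only mild subtlety is the case split between $x\le 1$ and $x\ge 1$, which is forced by the change of sign of $x^{1/n}-1$, and the handling of $c\le 1$, which is absorbed into the first case.
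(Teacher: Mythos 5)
Your proof is correct. The overall architecture matches the paper's: both split at $x=1$ and arrive at the same bound $c^{2}\log c/n$ on $(1,c]$ (your inequality $e^{t}-1\le t\,e^{t}$ applied to $t=(\log x)/n$ is essentially the estimate the paper extracts from the mean value theorem applied to $t\mapsto x^{t}$). Where you genuinely diverge is on $[0,1]$: the paper again uses the mean value theorem to write $x^{1+1/n}-x=\tfrac1n x^{\xi}\log x$ and then bounds $x|\log x|\le 1/e$, giving $1/(en)$, whereas you optimize $f_n(x)=x-x^{1+1/n}$ directly, locating the maximum at $x^{\ast}=\bigl(\tfrac{n}{n+1}\bigr)^{n}$ with value $x^{\ast}/(n+1)\le 1/n$. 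Your computation is exact for the function at hand and avoids any appeal to the MVT on that piece, at the cost of a slightly larger (but equally serviceable) constant; the paper's route is a single uniform mechanism for both regimes. Either way the conclusion $|x^{1+1/n}-x|\le C/n$ with $C$ depending only on $c$ is established, and your handling of the degenerate case $c\le 1$ via the maximum in the definition of $C$ is sound.
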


\begin{proof}
Let $x\in(0,c]$.  By the mean value theorem applied to the function $t\mapsto x^t$
there exists a $\xi\in(1,1+\frac{1}{n})$ such that
\[
  x^{1+1/n}-x = \frac{1}{n}x^\xi\log x.
\]
If $x\le 1$, then
\[
  \big|x^{1+1/n}-x\big| \le \frac{1}{n}x|\log x| \le \frac{1}{en}\,.
\]
If $c>1$ and $x\in(1,c]$, then
\[
  \big|x^{1+1/n}-x\big| \le \frac{1}{n}x^{1+1/n}\log x
  \le \frac{1}{n}x^2\log x \le \frac{c^2\log c}{n}\,.
\]
This proves the lemma.
%
\end{proof}

Let $\calH_1$ and $\calH_2$ be Hilbert spaces, $T$ a self-adjoint operator in $\calH_1$ and $V\in L(\calH_2,\calH_1)$. Then by $\sigma_V(T)$ we denote the support of the positive operator-valued measure $V^*E_T(\cdot)V$, where $E_T$ stands for the spectral measure of $T$. Clearly, $\sigma_V(T)$ is a closed subset of $\sigma(T)$. It is compact if and only if $\ran V\subseteq E_T(\Delta)\calH$ for some bounded set $\Delta\subseteq\R$.

\begin{prop}\label{p:late}
Let $T$ be a self-adjoint operator in $\calH_1$ and $V\in L(\calH_2,\calH_1)$.
Then for $\la\in\C\setminus\R$ the following statements are equivalent:
\begin{myenuma}
\item $\|(T-\la)^{-1}V\|\ge 1$;
\item for all $f\in C^+(\sigma(T))$
we have $\la\in\bigcup_{t\in\sigma_V(T)}\mathcal{B}_{f(t)^{-1}\|f(T)V\|}(t)$.
\end{myenuma}
\end{prop}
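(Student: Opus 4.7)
I will prove the two implications separately: (b) $\Rightarrow$ (a) is a one-line test of (b) against a specific $f$, while (a) $\Rightarrow$ (b) is a spectral-theoretic argument by contradiction whose delicate step addresses a borderline case.

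For (b) $\Rightarrow$ (a) I plug in $f(t) \defeq |t-\lambda|^{-1}$. Since $\lambda\in\C\setminus\R$, this $f$ is positive, bounded above by $|\Im\lambda|^{-1}$, and satisfies $|t|f(t)\to 1$ as $|t|\to\infty$, so $f\in C^+(\sigma(T))$. The functional calculus together with self-adjointness of $T$ yields $\|f(T)V\|^2 = \|V^*(T-\bar\lambda)^{-1}(T-\lambda)^{-1}V\| = \|(T-\lambda)^{-1}V\|^2$, and then (b) applied to this $f$ produces a $t\in\sigma_V(T)$ with $|\lambda-t|\le|t-\lambda|\cdot\|(T-\lambda)^{-1}V\|$, which simplifies to exactly $\|(T-\lambda)^{-1}V\|\ge 1$.

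For (a) $\Rightarrow$ (b) I fix $f\in C^+(\sigma(T))$ and suppose the conclusion in (b) fails, i.e.\ $|t-\lambda|f(t)>\|f(T)V\|$ for every $t\in\sigma_V(T)$. Setting $M\defeq\|f(T)V\|^2$, the continuous function $\phi(t)\defeq f(t)^2-M|t-\lambda|^{-2}$ is strictly positive on $\sigma_V(T)$. Since $V^*E_T(\cdot)V$ is supported precisely on $\sigma_V(T)$, this furnishes the operator inequality $V^*f(T)^2V\ge M\, V^*|T-\lambda|^{-2}V$, whence $\|f(T)V\|^2\ge M\,\|(T-\lambda)^{-1}V\|^2$, i.e.\ $\|(T-\lambda)^{-1}V\|\le 1$. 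Combined with (a) this forces equality.

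The main obstacle is upgrading this non-strict conclusion to an actual contradiction with (a). I would pick approximants $h_n\in\calH_2$ with $\|h_n\|=1$ and $\|(T-\lambda)^{-1}Vh_n\|\to\|(T-\lambda)^{-1}V\|\ge 1$, and form the scalar measures $\mu_n(\cdot)\defeq(E_T(\cdot)Vh_n,Vh_n)$. These are supported in $\sigma_V(T)$, have total mass $\le\|V\|^2$, and satisfy $\int|t-\lambda|^{-2}\,d\mu_n\to 1$ while $\int f^2\,d\mu_n\le M$, so $0\le\int\phi\,d\mu_n\to 0$. The key observation is that the decay of $|t-\lambda|^{-2}$ at infinity, via the uniform tail estimate $\int_{|t|\ge R}|t-\lambda|^{-2}\,d\mu_n\le\|V\|^2/(R-|\lambda|)^2$ (valid since $\mu_n(\R)=\|Vh_n\|^2\le\|V\|^2$), forces $\mu_n$ to retain a uniform positive mass on the compact set $K_R\defeq\sigma_V(T)\cap[-R,R]$ for $R$ and $n$ large (use $|t-\lambda|^{-2}\le|\Im\lambda|^{-2}$ to convert the integral lower bound on $K_R$ into a measure lower bound). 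Continuity of $\phi$ on the compact set $K_R$ then gives $\phi\ge\phi_R>0$ there, so $\liminf_n\int\phi\,d\mu_n\ge\phi_R\liminf_n\mu_n(K_R)>0$, contradicting $\int\phi\,d\mu_n\to 0$. Hence (b) must hold.
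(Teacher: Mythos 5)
Your proof is correct, and for the hard implication (a) $\Rightarrow$ (b) it takes a genuinely different route from the paper. The paper argues directly: it introduces the regularized functions $g_n(t)=f(t)|t-\la|^{1+1/n}$, whose growth at infinity forces the infimum of $|g_n|$ over $\sigma_V(T)$ to be attained at points $t_n$; it then bounds $\|f(T)V\|$ from below by $|t_n-\la|f(t_n)\cdot|t_n-\la|^{1/n}\bigl\||T-\la|^{-1-1/n}V\bigr\|$, controls the last factor via the auxiliary Lemma~\ref{l:helpy} (yielding $\bigl\||T-\la|^{-1-1/n}V\bigr\|\ge\gamma^{1/n}$ from hypothesis (a)), and finishes with a case distinction and a convergent subsequence $t_{n_k}\to t_0\in\sigma_V(T)$. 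You instead argue by contradiction: the negation of (b) produces a continuous $\phi>0$ on $\sigma_V(T)$; a near-maximizing sequence $h_n$ for $\|(T-\la)^{-1}V\|$ forces $\int\phi\,\rd\mu_n\to0$; and the tail decay of $|t-\la|^{-2}$ together with the uniform bound $\mu_n(\R)\le\|V\|^2$ keeps a uniform amount of mass on a compact piece $K_R$ of $\sigma_V(T)$, where $\phi$ is bounded below by a positive constant --- a contradiction. Both arguments rest on the same compactness mechanism (the decay of $|t-\la|^{-1}$ at infinity prevents the relevant minimizers, resp.\ measures, from escaping to infinity), but yours dispenses entirely with Lemma~\ref{l:helpy} and the fractional-exponent trick and is arguably cleaner, while the paper's version is direct rather than by contradiction and exhibits an explicit $t_0\in\sigma_V(T)$ whose ball contains $\la$. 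Two small points you leave implicit and should record: $M=\|f(T)V\|^2>0$ (needed to divide by $M$), which under (a) follows because $f>0$ on $\sigma(T)$ gives $f(T)V=0\Rightarrow V=0\Rightarrow\|(T-\la)^{-1}V\|=0<1$; and $K_R\ne\varnothing$, which is guaranteed a posteriori by the positive lower bound you obtain for $\mu_n(K_R)$.
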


\begin{proof}
(b)\,$\Sra$\,(a).
Let $f(t) \defeq |t-\la|^{-1}$, $t\in\sigma(T)$.
Then $f\in C^+(\sigma(T))$ and thus, by (b), we have $\la\in \mathcal{B}_{f(t_0)^{-1}\|f(T)V\|}(t_0)$
for some $t_0\in\sigma_V(T)$.  This means that
\[
  |t_0-\la|\,\le\,f(t_0)^{-1}\|f(T)V\| = |t_0-\la|\cdot\|(T-\la)^{-1}V\|,
\]
which is (a).

\smallskip
(a)\,$\Sra$\,(b). Let  $f\in C^+(\sigma(T))$.
It is obvious that $f$ can be extended to a function in $C^+(\R)$.
Choose such an extension and also denote it by $f$.  For $n\in\N$ we set
\[
  g_n(t) \defeq f(t)|t-\la|^{1+1/n},\qquad t\in\R.
\]
Then each $g_n$ is continuous and positive.
Note that for $|t| \ge 2|\mu|$, where $\mu=\Re\la$, we have
$|t-\la| \ge |t-\mu| \ge |t|-|\mu| \ge \tfrac{1}{2}|t|$
and hence $|t-\la|f(t)\ge \tfrac{1}{2}|t|f(t)$,
which is bounded below by a positive constant.
This implies that $\lim_{|t|\to\infty}|g_n(t)| = \infty$.
Thus, for each $n\in\N$ there exists $t_n\in\sigma_V(T)$
such that $|g_n(t_n)| = \delta(g_n) \defeq \inf_{t\in\sigma_V(T)}|g_n(t)|$.

On the other hand, $\dom |T-\la|^{1+1/n}\subseteq\dom g_n(T)$
and $f(T) = g_n(T)|T-\la|^{-1-1/n}\in L(\calH_1)$.  For arbitrary $h\in\calH_2$ with $\|h\|=1$
define the positive measure $\mu_h \defeq \|E_T(\cdot)Vh\|^2$, which has support
contained in $\sigma_V(T)$. Then,
\begin{align*}
  \|f(T)Vh\|^2
  &= \|g_n(T)|T-\la|^{-1-1/n}Vh\|^2
  = \int_{\sigma_V(T)}\frac{|g_n(t)|^2}{|t-\la|^{2+2/n}}\,\rd\mu_h(t)\\
  &\ge\delta^2(g_n)\big\||T-\la|^{-1-1/n}Vh\big\|^2,
\end{align*}
and hence
\begin{align}
\begin{split}\label{e:useme}
  \|f(T)V\|
  &\ge \delta(g_n)\big\||T-\la|^{-1-1/n}V\big\| = |g_n(t_n)|\big\||T-\la|^{-1-1/n}V\big\| \\[0.5ex]
  &= |t_n-\la|f(t_n)\cdot |t_n-\la|^{1/n}\big\||T-\la|^{-1-1/n}V\big\|.
\end{split}
\end{align}
Now, consider the functions $h_n(t) \defeq |t-\la|^{-1-1/n}$, $n\in\N$,
and $h(t) \defeq |t-\la|^{-1}$.
Since $|t-\la|^{-1}\le|\Im\la|^{-1}$ for all $t\in\R$,
it follows from Lemma~\ref{l:helpy} that there exists $C>0$
such that $|h_n(t)-h(t)|\le C/n$ for all $t\in\R$.
This, together with (a), implies that
\begin{align*}
  1-\big\||T-\la|^{-1-1/n}V\big\|
  &\le \big\|(T-\la)^{-1}V\big\| - \big\||T-\la|^{-1-1/n}V\big\| \\[0.5ex]
  &= \|h(T)V\| - \|h_n(T)V\|
  \le \big\|h(T)V-h_n(T)V\big\|
  \le\frac{C\|V\|}{n}\,,
\end{align*}
which, for sufficiently large $n$, yields
\[
  \big\||T-\la|^{-1-1/n}V\big\|^n \ge \left(1 - \frac{C\|V\|}{n}\right)^n.
\]
As the right-hand side tends to $e^{-C\|V\|}$ as $n\to\infty$,
there exists $\gamma > 0$ such that $\big\||T-\la|^{-1-1/n}V\big\|\ge\gamma^{1/n}$
for all $n\in\N$.
Hence, if there exists some $n\in\N$ such that $|t_n-\la|\ge 1/\gamma$,
we find from \eqref{e:useme} that $\|f(T)V\|\,\ge\,|t_n-\la|f(t_n)$,
which means that $\la\in \mathcal{B}_{f(t_n)^{-1}\|f(T)V\|}(t_n)$.
Otherwise, there exists a subsequence $(t_{n_k})$ such that $t_{n_k}\to t_0$
as $k\to\infty$ with $t_0\in\sigma_V(T)$.
In this case, replacing $n$ by $n_k$ in \eqref{e:useme} and letting $k\to\infty$
we obtain
\[
  \|f(T)V\|\,\ge\,|t_0-\la|f(t_0)\cdot \|(T-\la)^{-1}V\|\,\ge\,|t_0-\la|f(t_0),
\]
that is, $\la\in \mathcal{B}_{f(t_0)^{-1}\|f(T)V\|}(t_0)$.
%
\end{proof}

Proposition~\ref{p:late} and Theorem~\ref{t:new} now immediately imply
the following slight improvement of Theorem~\ref{t:gersh_f}.

\begin{thm}\label{t:gersh_f2}
Let $S$ be the block operator matrix in \eqref{bom_gersh_1}.
Then, for any $f\in C^+(\sigma(A))$ and $g\in C^+(\sigma(D))$ we have
\begin{equation}\label{e:general2}
  \sigma(S)\setminus\R
  \subseteq \Biggl(\bigcup_{t\in\sigma_B(A)}\mathcal{B}_{f(t)^{-1}\|f(A)B\|}(t)\Biggr)\,\cap\,
  \Biggl(\bigcup_{s\in\sigma_{B^*}(D)}\mathcal{B}_{g(s)^{-1}\|g(D)B^*\|}(s)\Biggr).
\end{equation}
\end{thm}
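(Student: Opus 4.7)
The plan is to obtain Theorem \ref{t:gersh_f2} as a direct combination of Theorem \ref{t:new} with the characterization in Proposition \ref{p:late}. Fix $\la \in \sigma(S) \setminus \R$. By Theorem \ref{t:new} we have simultaneously
\[
  \|(A-\la)^{-1}B\| \ge 1 \qquad \text{and} \qquad \|(D-\la)^{-1}B^*\| \ge 1,
\]
so the task reduces to translating each of these two resolvent norm inequalities into the asserted ``Gershgorin ball'' form; everything else is bookkeeping.

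For this translation I would apply Proposition \ref{p:late} twice. The first application is with $\calH_1 = \calH_+$, $\calH_2 = \calH_-$, $T = A$, $V = B$: the inequality $\|(A-\la)^{-1}B\| \ge 1$ is precisely condition (a) of Proposition \ref{p:late}, and therefore condition (b) yields that for every $f \in C^+(\sigma(A))$,
\[
  \la \in \bigcup_{t\in\sigma_B(A)} \mathcal{B}_{f(t)^{-1}\|f(A)B\|}(t).
\]
The second application is with $\calH_1 = \calH_-$, $\calH_2 = \calH_+$, $T = D$, $V = B^*$: the inequality $\|(D-\la)^{-1}B^*\| \ge 1$ gives, for every $g \in C^+(\sigma(D))$,
\[
  \la \in \bigcup_{s\in\sigma_{B^*}(D)} \mathcal{B}_{g(s)^{-1}\|g(D)B^*\|}(s).
\]
Intersecting the two memberships and taking the union over all $\la \in \sigma(S) \setminus \R$ yields the inclusion \eqref{e:general2}.

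Since both ingredients have already been established, I do not expect a substantive obstacle; the argument is essentially a matching of notation. The only points worth verifying carefully are that the hypotheses on $V$ in Proposition \ref{p:late} are met in both applications (which holds since $B \in L(\calH_-, \calH_+)$ implies $B^* \in L(\calH_+, \calH_-)$), and that the ``centre sets'' produced by Proposition \ref{p:late} are exactly $\sigma_B(A)$ and $\sigma_{B^*}(D)$ as defined above — that is, the supports of the operator-valued measures $B^* E_A(\cdot) B$ and $B\, E_D(\cdot) B^*$, respectively. As a sanity check, Theorem \ref{t:gersh_f} then drops out for free, since $\sigma_B(A) \subseteq \sigma(A)$ and $\sigma_{B^*}(D) \subseteq \sigma(D)$, so enlarging the unions in \eqref{e:general2} produces \eqref{e:general}.
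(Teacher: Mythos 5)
Your proposal is correct and coincides with the paper's own derivation: the authors obtain Theorem \ref{t:gersh_f2} exactly by combining Theorem \ref{t:new} with Proposition \ref{p:late} applied to $(T,V)=(A,B)$ and $(T,V)=(D,B^*)$, and your identification of the centre sets $\sigma_B(A)$ and $\sigma_{B^*}(D)$ as the supports of $B^*E_A(\cdot)B$ and $BE_D(\cdot)B^*$ matches the definition preceding Proposition \ref{p:late}. Nothing further is needed.
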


We shall now check the performance of several spectral enclosures
for block operator matrices from above and from the literature on a specific example.
The result is illustrated in Figure~\ref{fig:ex_gershgorin} below.

\begin{ex}\label{ex:gershgorin}
Let $\cH_+=\cH_-=\dC^2$, consider the matrices
\[
  A = \mymatrix{1 & 0 \\ 0 & 2}, \quad
  B = \mymatrix{\frac{1}{3} & 0 \\[0.5ex] 0 & \frac{2}{3}}, \quad
  D = \mymatrix{1 & 0 \\ 0 & 1},
\]
and let $S$ be as in \eqref{bom_gersh_1}. The eigenvalues of $S$ are given by
\begin{equation}\label{ex_gersh_ev}
  1 \pm \frac{1}{3}i
  \qquad\text{and}\qquad
  \frac{3}{2} \pm \frac{\sqrt{7}}{6}i.
\end{equation}
\begin{enumerate}[1.]
\item 
The spectral enclosure from \cite[Theorem 2.7]{Salas99} states that
\begin{align*}
  \sigma(S)\setminus\R
  &\subseteq\{\la\in\rho(A) : \|(A-\la)^{-1}\|^{-1}\le\|B\|\}\,\cup\,
  \{\la\in\rho(D) : \|(D-\la)^{-1}\|^{-1}\le\|B\|\} \\[0.5ex]
  &= \mathcal{B}_{\frac 2 3}(1)\cup \mathcal{B}_{\frac 2 3}(2).
\end{align*}

\item 
The spectral enclosure from \cite[Theorem 6.4]{RT18} is slightly better than the previous one:
\begin{align*}
  \sigma(S)\setminus\R
  \subseteq\{\la\in\rho(A) : \|(A-\la)^{-1}B\|\ge 1\}\,\cup\,
  \{\la\in\rho(D) : \|(D-\la)^{-1}B^*\|\ge 1\}.
\end{align*}
However, since
\begin{equation}\label{e:bothies}
  \|(A-\la)^{-1}B\| = \frac 13\max\{|1-\la|^{-1},2|2-\la|^{-1}\}
  \qquad\text{and}\qquad
  \|(D-\la)^{-1}B^*\| = \frac{2}{3|1-\la|}\,,
\end{equation}
this yields the same enclosure as before: $\sigma(S)\setminus\R\subseteq\mathcal{B}_{\frac 2 3}(1)\cup \mathcal{B}_{\frac 2 3}(2)$.

\item 
The enclosure in \cite[Theorem~3.5]{BPT13} (see also \eqref{e:bpt})
yields the estimate
\[
  \sigma(S)\setminus\dR \subseteq (\mathcal{B}_{\frac{2}{3}}(1) \cup \mathcal{B}_{\frac{2}{3}}(2))\cap \mathcal{B}_{\frac 2 3}(1) = \mathcal{B}_{\frac 2 3}(1).
\]

\item 
The next enclosure that we check is \eqref{e:neumann}.
Since all matrices are diagonal, we have $N(\la) = N(\ol\la) = M(\la) = M(\ol\la)$.
Thus \eqref{e:neumann} is
\begin{equation}\label{encl_DMS15}
  \sigma(S)\setminus\R\,\subseteq\,\bigl\{\lambda\in\C\setminus\R : \|B(D-\lambda)^{-1}B^*(A-\lambda)^{-1}\| \ge 1\bigr\}.
\end{equation}
We have
\[
  B(D-\lambda)^{-1}B^*(A-\lambda)^{-1}
  = \frac{1}{9}\mymatrix{ (1-\lambda)^{-2} & 0 \\[2ex]
  0 & 4(1-\lambda)^{-1}(2-\lambda)^{-1} }
\]
and hence
\[
  \|B(D-\lambda)^{-1}B^*(A-\lambda)^{-1}\|
  = \frac 1 {9}\,\max\biggl\{\frac{1}{|\lambda-1|^2},\,\frac{4}{|\la-1|\,|\lambda-2|}\biggr\}.
\]
Therefore a non-real complex number $\lambda$ is in the right-hand side of \eqref{encl_DMS15}
if and only if
\[
  9|\lambda-1|^2 \le 1 \qquad\text{or}\qquad
  9|\lambda-1|\,|\lambda-2| \le 4.
\]
Since the first inequality implies the second, we obtain that \eqref{encl_DMS15} is equivalent to
\begin{equation}\label{ex_cassini}
  \sigma(S)\setminus\R \subseteq \biggl\{\lambda\in\dC\setminus\R :
  |\lambda-1|\,|\lambda-2| \le \frac{4}{9}\biggr\}.
\end{equation}

\item 
To compute \eqref{e:best} in Theorem~\ref{t:new}, we use \eqref{e:bothies} to get
\begin{equation}\label{e:debest}
  \sigma(S)\setminus\R\,\subseteq\,(\mathcal{B}_{\frac 1 3}(1)\cup \mathcal{B}_{\frac 2 3}(2))\cap \mathcal{B}_{\frac 23}(1)
  = \mathcal{B}_{\frac 1 3}(1)\cup(\mathcal{B}_{\frac 2 3}(1)\cap \mathcal{B}_{\frac 23}(2)).
\end{equation}

\item 
Let us now discuss our spectral enclosure from Theorem \ref{t:gersh_f}.
Choose $g(t) = f(t) = |t|^{-1}$, which is valid since $A$ and $D$ are invertible.  Then
\[
  \|f(A)B\| = \|A^{-1}B\| = \frac{1}{3}
  \qquad\text{and}\qquad
  \|g(D)B^*\| = \|D^{-1}B^*\|=\frac{2}{3}.
\]
Hence, \eqref{e:general} yields
\begin{equation}\label{Mozart}
  \sigma(S)\setminus\R
  \subseteq \bigl(\mathcal{B}_{\frac 1 3}(1)\cup \mathcal{B}_{\frac 2 3}(2)\bigr)\cap \mathcal{B}_{\frac 2 3}(1)
  = \mathcal{B}_{\frac 1 3}(1)\cup \bigl(\mathcal{B}_{\frac 2 3}(1)\cap \mathcal{B}_{\frac 2 3}(2)\bigr),
\end{equation}
which is the same as \eqref{e:debest}.

The right-hand side of \eqref{e:debest} (or \eqref{Mozart}) is obviously contained
in the right-hand side of \eqref{ex_cassini}; actually, it is significantly smaller
(e.g.\ the interval $(\tfrac 53,\tfrac 73)$ is in the right-hand side of \eqref{ex_cassini}
but not in the right-hand side of \eqref{Mozart}).
Note that the first four enclosing sets have the eigenvalues $1 \pm \frac{1}{3}i$
in their interior, while all four eigenvalues of $S$ lie on the boundary
of the region given in \eqref{e:debest}.
\end{enumerate}

\begin{figure}[ht]
\begin{center}
\begin{tikzpicture}[scale=2.5]
  \filldraw[color=enclcol, thick, fill=enclcol!10] (1,0) circle (0.333);
  \filldraw[color=enclcol, thick, fill=enclcol!10] (1.5,0.441) arc (138.6:221.4:0.667)
  arc (-41.41:41.41:0.667);
  \draw[enclcol, very thin] (2,0) circle (0.666);
  \draw[enclcol, very thin] (1,0) circle (0.666);
  \draw[color=blue, densely dashed, thick, variable=\t, domain=-90:270, smooth]
    plot ({1.5+cos(\t)*sqrt(cos(2*\t)/4+sqrt(0.1975-sin(2*\t)*sin(2*\t)/16)},
    {sin(\t)*sqrt(cos(2*\t)/4+sqrt(0.1975-sin(2*\t)*sin(2*\t)/16)});
  \draw[->] (-0.5,0) -- (3,0);
  \draw[->] (0,-0.8) -- (0,0.8);
  \filldraw[black] (1,0.333) circle (0.03);
  \filldraw[black] (1,-0.333) circle (0.03);
  \filldraw[black] (1.5,0.441) circle (0.03);
  \filldraw[black] (1.5,-0.441) circle (0.03);
  \draw[black] (1,-0.03) -- (1,0.03);
  \draw[black] (2,-0.03) -- (2,0.03);
  \node at (1,-0.12) {$1$};
  \node at (2,-0.12) {$2$};
\end{tikzpicture}
\caption{The spectral enclosure for $\sigma(S)\setminus\dR$ in \eqref{e:debest}
for the operator in Example~\ref{ex:gershgorin} is a union of a disc and the intersection
of two discs (filled orange region).
The boundary of the set on the right-hand side of \eqref{ex_cassini} is the blue dashed line.
The eigenvalues of $S$ (see \eqref{ex_gersh_ev}) are indicated with black dots.}
\label{fig:ex_gershgorin}
\end{center}
\end{figure}
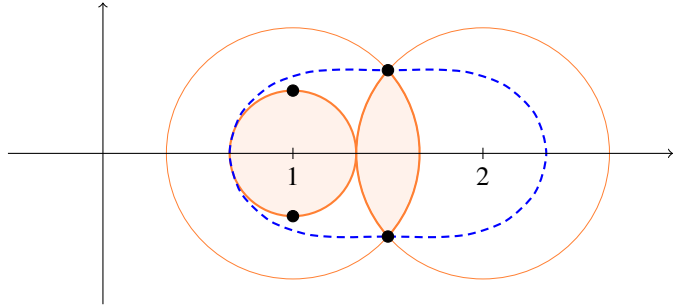
\end{ex}

In the following corollary we consider a useful special case of Theorem~\ref{t:gersh_f}.
We denote by $\C_+$ and $\C_-$ the open right and left half-planes, respectively.
The enclosure described in Corollary~\ref{thm:gershgorin} is illustrated in
Figure~\ref{fig:gershgorin17}.

\begin{cor}\label{thm:gershgorin}
Let $S$ be the block operator matrix in \eqref{bom_gersh_1} and assume that $0\in\rho(A)$.
Then
\begin{equation}\label{specincldiscs}
  \sigma(S)\setminus\dR \subseteq \bigcup_{a\in\sigma_B(A)} \mathcal{B}_{|a|\|A^{-1}B\|}(a).
\end{equation}
Assume, in addition, that $\|A^{-1}B\|<1$.  Then the right-hand side of \eqref{specincldiscs}
is contained in the set
\begin{equation}\label{double_sector}
  \Biggl\{z\in\dC: |\Im z| \le \frac{\|A^{-1}B\|}{\sqrt{1-\|A^{-1}B\|^2}\,}|\Re z|\Biggr\},
\end{equation}
which is a double-sector with half opening angle $\arcsin\|A^{-1}B\|$;
moreover,
\[
  \bigl(\sigma(S)\setminus\dR\bigr)\cap\mathbb C_+ \subseteq   \begin{cases}
    \bigl\{z\in\dC:\ \Re z\ge\bigl(1-\|A^{-1}B\|\bigr)\min\bigl(\sigma(A)\cap(0,\infty)\bigr)\bigr\},
    & \sigma(A)\cap(0,\infty)\ne\varnothing
    \\[1ex]
     \varnothing, & \text{otherwise},
  \end{cases}
\]
and
\[
  \bigl(\sigma(S)\setminus\dR\bigr)\cap\mathbb C_- \subseteq
  \begin{cases}
    \bigl\{z\in\dC:\ \Re z\le\bigl(1-\|A^{-1}B\|\bigr)\max\bigl(\sigma(A)\cap(-\infty,0)\bigr)\bigr\},
    & \sigma(A)\cap(-\infty,0)\ne\varnothing
    \\[1ex]
     \varnothing, & \text{otherwise}.
  \end{cases}
\]
\end{cor}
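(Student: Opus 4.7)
The plan is to obtain \eqref{specincldiscs} as a direct consequence of Theorem \ref{t:gersh_f2}, and then to carry out an elementary geometric analysis of the resulting family of discs to deduce the sector and half-plane enclosures.

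For \eqref{specincldiscs} I would set $f(t)=|t|^{-1}$, which is well defined and continuous on $\sigma(A)$ since $0\in\rho(A)$ guarantees $\dist(0,\sigma(A))>0$. One checks immediately that $f\in C^+(\sigma(A))$: positivity and boundedness on $\sigma(A)$ follow from $|t|\ge \dist(0,\sigma(A))>0$, and $|t|f(t)=1$ makes the third condition in \eqref{defC+} trivial. Applying Theorem \ref{t:gersh_f2} with this $f$ and any admissible $g$ and then dropping the second factor in the intersection yields
\[
  \sigma(S)\setminus\dR \subseteq \bigcup_{a\in\sigma_B(A)}\mathcal{B}_{|a|^{-1}\|A^{-1}B\|\cdot|a|\cdot\,}(a)^{\!}
  \;=\;\bigcup_{a\in\sigma_B(A)}\mathcal{B}_{|a|\,\|A^{-1}B\|}(a),
\]
because $f(a)^{-1}\|f(A)B\|=|a|\,\|A^{-1}B\|$.

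For the double-sector inclusion set $\kappa\defeq\|A^{-1}B\|<1$ and consider $\lambda=x+iy$ with $(x-a)^2+y^2\le a^2\kappa^2$ for some $a\in\sigma_B(A)\subseteq\R$. Expanding gives $x^2+y^2\le 2ax-a^2(1-\kappa^2)$. The target inequality $(1-\kappa^2)y^2\le\kappa^2x^2$ is then equivalent, after substituting the upper bound for $y^2$ and simplifying, to $0\le\bigl(x-a(1-\kappa^2)\bigr)^2$, which holds trivially. Hence $|\Im\lambda|\le\kappa(1-\kappa^2)^{-1/2}|\Re\lambda|$, which is the sector \eqref{double_sector}, and the half opening angle $\arctan\!\bigl(\kappa/\sqrt{1-\kappa^2}\bigr)=\arcsin\kappa$ follows from a standard trigonometric identity.

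For the half-plane statements observe that since $\kappa<1$, the disc $\mathcal{B}_{|a|\kappa}(a)$ lies entirely in the closed right half-plane if $a>0$ and entirely in the closed left half-plane if $a<0$ (the case $a=0$ cannot occur as $0\in\rho(A)$). Thus if $\lambda\in(\sigma(S)\setminus\R)\cap\C_+$, the disc containing $\lambda$ must be centred at some $a\in\sigma_B(A)\cap(0,\infty)$, so in particular $\sigma_B(A)\cap(0,\infty)\ne\varnothing$ and therefore $\sigma(A)\cap(0,\infty)\ne\varnothing$; when $\sigma(A)\cap(0,\infty)=\varnothing$ this already gives emptiness of $(\sigma(S)\setminus\R)\cap\C_+$. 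In the non-empty case the estimate $\Re\lambda\ge a(1-\kappa)$ (minimum of $\Re$ on $\mathcal{B}_{a\kappa}(a)$) together with $a\ge\min(\sigma(A)\cap(0,\infty))$—the minimum exists because $\sigma(A)\cap(0,\infty)\subseteq[\dist(0,\sigma(A)),\infty)$ is closed and bounded below by a positive constant—yields the claimed bound. The argument for $\C_-$ is entirely symmetric. The only mildly subtle point is making sure the minimum $\min(\sigma(A)\cap(0,\infty))$ is attained, which is handled by $0\in\rho(A)$ producing a spectral gap at the origin.
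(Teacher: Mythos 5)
Your proposal is correct and follows essentially the same route as the paper: apply the Gershgorin-type theorem with $f(t)=|t|^{-1}$ to get \eqref{specincldiscs}, then deduce the sector and half-plane bounds from the geometry of the discs $\mathcal{B}_{|a|\|A^{-1}B\|}(a)$. Your completing-the-square verification of the sector inclusion and your explicit treatment of the empty cases and of the attainment of $\min(\sigma(A)\cap(0,\infty))$ are just more detailed write-ups of what the paper leaves as ``elementary to check.''
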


\begin{proof}
Since $0\in\rho(A)$, the inclusion \eqref{specincldiscs} follows
from Theorem~\ref{t:gersh_f} by setting $f(t)=|t|^{-1}$.
Now assume also that $\|A^{-1}B\|<1$.
It is elementary to check that the lines
\[
  \Im z = \pm \frac{\|A^{-1}B\|}{\sqrt{1-\|A^{-1}B\|^2}\,}\Re z
\]
touch the discs $\mathcal{B}_{|a|\|A^{-1}B\|}(a)$, $a\in\sigma(A)$,  tangentially.
Further, these discs are contained in the double sector enclosed by
the two lines (see \eqref{double_sector}).
Hence, the right-hand side of \eqref{specincldiscs} is contained in \eqref{double_sector}.

Finally, if $\sigma(A)\cap(0,\infty)\ne\varnothing$, then,
for every $a\in\sigma(A)\cap(0,\infty)$ and $z\in \mathcal{B}_{a\|A^{-1}B\|}(a)$, we have
\[
  \Re z \ge a-a\|A^{-1}B\| \ge \bigl(1-\|A^{-1}B\|\bigr)\min\bigl(\sigma(A)\cap(0,\infty)\bigr).
\]
Similarly, if $\sigma(A)\cap(-\infty,0)\ne\varnothing$, then,
for every $a\in\sigma(A)\cap(-\infty,0)$ and $z\in \mathcal{B}_{a\|A^{-1}B\|}(a)$, we have
\[
  \Re z \le \bigl(1-\|A^{-1}B\|\bigr)\min\bigl(\sigma(A)\cap(-\infty,0)\bigr).
\]
This shows the inclusions for $(\sigma(S)\setminus\dR)\cap\mathbb C_+$
and $(\sigma(S)\setminus\dR)\cap\mathbb C_-$.
\end{proof}

A similar result holds when one replaces $A$ and $B$ by $D$ and $B^*$, respectively.
More precisely,  if $0\in\rho(D)$ then
\begin{equation}\label{specincldiscs2}
  \sigma(S)\setminus\dR \subseteq
  \bigcup_{d\in\sigma_{B^*}(D)} \mathcal{B}_{|d|\|D^{-1}B^*\|}(d).
\end{equation}
If it is also assumed that $\|D^{-1}B^*\|<1$, then
\begin{equation}\label{double_sector2}
  \sigma(S)\setminus\dR \subseteq \Biggl\{z\in\dC:\ |\Im z| \le \frac{\|D^{-1}B^*\|}{\sqrt{1-\|D^{-1}B^*\|^2}\,}|\Re z|\Biggr\},
\end{equation}
which is a double-sector with half opening angle $\arcsin\|D^{-1}B^*\|$;
moreover,
\[
  \bigl(\sigma(S)\setminus\dR\bigr)\cap\mathbb C_+ \subseteq   \begin{cases}
    \bigl\{z\in\dC:\ \Re z\ge\bigl(1-\|D^{-1}B^*\|\bigr)\min\bigl(\sigma(D)\cap(0,\infty)\bigr)\bigr\},
    & \sigma(D)\cap(0,\infty)\ne\varnothing
    \\[1ex]
     \varnothing, & \text{otherwise},
  \end{cases}
\]
and
\[
  \bigl(\sigma(S)\setminus\dR\bigr)\cap\mathbb C_- \subseteq
  \begin{cases}
    \bigl\{z\in\dC:\ \Re z\le\bigl(1-\|D^{-1}B^*\|\bigr)\max\bigl(\sigma(D)\cap(-\infty,0)\bigr)\bigr\},
    & \sigma(D)\cap(-\infty,0)\ne\varnothing
    \\[1ex]
     \varnothing, & \text{otherwise}.
  \end{cases}
\]
%
%
%

\section{Application to \texorpdfstring{$J$}{J}-frame operators}
\label{sec:jframe}


\noindent
Originally, frame theory has been developed for Hilbert spaces;
see, e.g.\ \cite{C03} and the references therein.
A \emph{frame} for a Hilbert space $(\cH,\Skdef)$
is a family of
vectors $\cF=\{f_i\}_{i\in I}$  for which there exist
constants $0 < \alpha \le \beta < \infty$ such that
\begin{equation}\label{ecu frames}
  \alpha\,\|f\|^2 \le \sum_{i\in I} |\langle f,f_i\rangle |^2 \le \beta\,\|f\|^2, \quad \text{for every $f\in\cH$}.
\end{equation}
The optimal constants $\alpha$ and $\beta$ for which \eqref{ecu frames} holds are known as the \emph{frame bounds} of $\cF$.

Recently, various approaches have been suggested to introduce frame theory
also to Krein spaces; see \cite{EFW,GMMM12,PW}.
In this section we apply our results to $J$-frame operators as introduced
in \cite{GMMM12}; see also \cite{GLLMMT}.
In particular, we improve the enclosure for the non-real spectrum of $J$-frame operators
obtained in \cite{GLLMMT}; see Theorem \ref{Lemmens} below.

\medskip
%

An indefinite inner product space $(\cH,\product)$ is a (complex) vector space $\cH$ endowed with a Hermitian sesquilinear form $\Skindef$.
Given a subspace $\cS$ of $\cH$, the orthogonal subspace to $\cS$ is defined by
\[
\cS^{\ort}=\{x\in\cH:\ \K{x}{s}=0 \ \text{for every $s\in\cS$} \},
\]
and $\cS$ is called non-degenerate if $\cS\cap\cS^{\ort}=\{0\}$.
If $\cS$ and $\cT$ are subspaces of $\cH$, the notation $\cS \ort \cT$ stands for $\cS\subseteq \cT^{\ort}$.

A Krein space is a non-degenerate indefinite inner product space $(\cH,\product)$ which admits a decomposition $\calH = \calH_+\,\ds\,\calH_-$ such that $\calH_+\,[\perp]\calH_-$
and $(\calH_\pm,\pm\product)$ are Hilbert spaces.
Such a decomposition is
often called a \emph{fundamental decomposition} and it is denoted $\calH = \calH_+\,[\ds]\,\calH_-$.

The Hilbert spaces $(\calH_\pm,\pm\product)$ induce in a natural way a positive definite inner product $\Skdef$ on $\cH$ such that $(\cH,\Skdef)$ is a Hilbert space. Observe that the inner products $\Skindef$ and $\Skdef$ of $\cH$ are related by means of a
\emph{fundamental symmetry}, i.e.\ a unitary self-adjoint operator $J\in L(\mathcal H)$
that satisfies
\[
  (f,g)=\K{Jf}{g}, \quad f,g\in\cH.
\]
Although the fundamental decomposition is not unique, the norms induced
by different fundamental decompositions turn out to be equivalent;
see, e.g.\ \cite[Proposition~I.1.2]{L82}.
Therefore, the (Hilbert space) topology in $\cH$ does not depend on the
chosen fundamental decomposition.

%
\medskip

Let us now introduce $J$-frames. Given a Krein space $(\cH,\Skindef)$, consider a frame $\cF=\{f_i\}_{i\in I}$ for the associated Hilbert space $(\cH,\Skdef)$
and set
\[
  I_+ \defeq \{i\in I:\, [f_i,f_i] \ge 0\}
  \qquad\text{and}\qquad
  I_- \defeq \{i\in I:\, [f_i,f_i]< 0\}.
\]
Then $\cF$ is called a \emph{$J$-frame} for $\cH$
if $\cM_+\defeq \ol{\linspan\{f_i:\, i\in I_+\}}$
and $\cM_-\defeq \ol{\linspan\{f_i:\, i\in I_-\}}$  are non-degenerate subspaces
of $\cH$ and there exist constants $0<\alpha_\pm \leq \beta_\pm$ such that
\begin{equation}\label{eq J frame bounds}
  \alpha_\pm (\pm[f,f]) \le \sum_{i\in I_\pm} \big|[f,f_i]\big|^2
  \le \beta_\pm (\pm[f,f])\,
  \qquad \text{for $f\in \cM_\pm$};
\end{equation}
see \cite[Theorem~3.9]{GMMM12}.
The spaces $(\calM_\pm,\pm\product)$ are then Hilbert spaces
by \cite[Proposition~3.8]{GMMM12} and the optimal constants $0<\alpha_\pm \leq \beta_\pm$
are called the \emph{$J$-frame bounds of $\cF$}.

Note that \eqref{eq J frame bounds} says that $\cF_+=\{f_i\}_{i\in I_+}$
and $\cF_-=\{f_i\}_{i\in I_-}$ are frames for the Hilbert spaces $(\cM_+,\Skindef)$
and $(\cM_-,-\Skindef)$, respectively.
Moreover, the frame bounds for $\cF_+$ and $\cF_-$ are $\alpha_+,\beta_+$
and $\alpha_-,\beta_-$, respectively. Also note that not necessarily $\calM_+\,[\perp]\,\calM_-$.

The \emph{$J$-frame operator} associated with $\cF$ is defined by
\begin{equation*}
  Sf =\sum_{i\in I_+} [f,f_i]f_i- \sum_{i\in I_-} [f,f_i]f_i, \qquad f\in\cH.
\end{equation*}
It plays a fundamental role in the indefinite reconstruction formula (see \cite{GMMM12}).
The operator $S$ is an invertible, bounded, self-adjoint operator in the Krein space $\cH$.
The following representation for $J$-frame operators was obtained
in \cite[Theorems~3.1 and 3.2]{GLLMMT}.

\begin{thm}\label{Dublin}
Given a bounded self-adjoint operator $S$ in a Krein space
$(\mathcal H, \Skindef)$, the following conditions are equivalent.
\begin{myenum}
\item
$S$ is a $J$-frame operator.
\item
There exists a fundamental decomposition
\begin{equation}\label{decomp17}
  \mathcal H = \mathcal H_+ [\Pluspunkt] \mathcal H_-
\end{equation}
such that $S$ admits a representation with respect to \eqref{decomp17} of the form
\begin{equation}\label{Cork}
  S = \mymatrix{ A & -AK \\[0.5ex] K^*A & D}
\end{equation}
where $A$ is a uniformly positive operator in the Hilbert space
$(\mathcal H_+, \Skindef)$, $K: \mathcal H_-\to \mathcal H_+$ is a
uniform contraction\footnote{The operator norm used depends on the norm induced by the
respective fundamental decomposition \eqref{decomp17} or \eqref{decomp18}.\label{fn:norm}}
\textup{(}i.e.\ $\|K\|<1$\textup{)}, and $D$ is a self-adjoint operator
such that $D+K^*AK$ is uniformly positive in the
Hilbert space $(\mathcal H_-, -\Skindef)$.
\item
There exists a fundamental decomposition
\begin{equation}\label{decomp18}
  \mathcal H = \mathcal K_+ [\Pluspunkt] \mathcal K_-
\end{equation}
such that $S$ admits a representation with respect to \eqref{decomp18}
of the form
\begin{equation}\label{Corky}
  S = \mymatrix{ A' & LD' \\[0.5ex] -D'L^* & D' }
\end{equation}
where $D'$ is a uniformly positive operator in $(\mathcal K_-, -\Skindef)$,
$L: \mathcal K_-\to \mathcal K_+$ is a uniform contraction\textsuperscript{\rmref{fn:norm}},
and $A'$ is a self-adjoint operator such that $A'+LD'L^*$ is uniformly positive
in $(\mathcal K_+, \Skindef)$.
\end{myenum}
\end{thm}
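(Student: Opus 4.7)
The proof will split into the equivalences (i)\,$\Leftrightarrow$\,(ii) and (ii)\,$\Leftrightarrow$\,(iii); the latter mirrors the former by interchanging the roles of the positive and negative parts of the $J$-frame, so I concentrate on (i)\,$\Leftrightarrow$\,(ii). For the crucial direction (i)\,$\Rightarrow$\,(ii), my plan is to build the fundamental decomposition out of the closed span $\cM_+$ of the positive part of the $J$-frame. The $J$-frame inequalities \eqref{eq J frame bounds} imply that $(\cM_+, \product)$ is a Hilbert space, hence $\cM_+$ is a closed uniformly positive subspace of $\cH$; it is therefore regular, and the choice $\cH_+ := \cM_+$, $\cH_- := \cM_+^{[\perp]}$ defines a fundamental decomposition (cf.\ \cite{L82}). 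Since $\cM_-$ is uniformly negative and $\cM_+ + \cM_-$ is dense in $\cH$ (because $\cF$ is a frame for $\cH$), the standard angular-operator calculus for definite subspaces in Krein spaces yields an operator $\Gamma : \cH_- \to \cH_+$ with $\|\Gamma\| < 1$ such that $\cM_- = \{\Gamma x + x : x \in \cH_-\}$.

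With respect to this decomposition, Krein self-adjointness forces $S = \mymatrix{A & B \\ -B^* & D}$ as in \eqref{PlazaMalvinas}. The operator $A$ is uniformly positive in $(\cH_+, \product)$: informally, $A$ equals the frame operator of $\cF_+$ on $\cM_+$ modified by a contribution from $\hat S_- f := \sum_{i \in I_-}[f, f_i] f_i$ expressed through $\Gamma$, and a direct calculation combined with the bounds \eqref{eq J frame bounds} shows uniform positivity. In particular $A$ is invertible, and I can set $K := -A^{-1}B$; the Schur-type factorization
\[
  S = \mymatrix{I & 0 \\ K^* & I} \mymatrix{A & 0 \\ 0 & D + K^*AK} \mymatrix{I & -K \\ 0 & I}
\]
then identifies $D + K^*AK$ with a pull-back, via the isomorphism $x \mapsto \Gamma x + x$, of the frame operator of $\cF_-$ on $(\cM_-, -\product)$; by \eqref{eq J frame bounds} this operator is uniformly positive. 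The inequality $\|K\| < 1$ will follow from combining the uniform contraction property of $\Gamma$ with the uniform positivity of $A$ and of $D + K^*AK$.

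For (ii)\,$\Rightarrow$\,(i) I would invert the construction: pick frames $\{g_i\}_{i \in I_+}$ for $(\cH_+, \product)$ and $\{h_j\}_{j \in I_-}$ for $(\cH_-, -\product)$ whose frame operators are $A$ and $D + K^*AK$, respectively (these exist because both operators are uniformly positive), transport the second family via the embedding $x \mapsto -Kx + x$ into a uniformly negative subspace $\cM_-$, and verify that the combined family is a $J$-frame whose $J$-frame operator equals $S$; the $J$-frame bounds can be read off from the spectral bounds of $A$ and $D + K^*AK$. The equivalence (ii)\,$\Leftrightarrow$\,(iii) then follows by the same argument with $\cM_-$ in the role of $\cM_+$, or by a direct algebraic transformation between the two Schur-type factorizations. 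The main technical obstacle is the simultaneous verification of $B = -AK$, $\|K\| < 1$, and the uniform positivity of $D + K^*AK$; all three rest on combining the angular operator representation of $\cM_-$ with the $J$-frame bounds \eqref{eq J frame bounds}, after which the remainder of the argument is essentially formal.
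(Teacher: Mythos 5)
The paper itself contains no proof of Theorem~\ref{Dublin}: it is quoted verbatim from \cite[Theorems~3.1 and 3.2]{GLLMMT}, so your attempt has to be measured against that source. Your overall architecture (fundamental decompositions built from the spans $\cM_\pm$ of the two parts of the $J$-frame, angular operators, a Schur-type factorization, symmetry between (ii) and (iii)) is the right one, but your construction for (i)\,$\Rightarrow$\,(ii) is attached to the wrong subspace, and a key claim in it is false. With $\cH_+:=\cM_+$ and $\cH_-:=\cM_+^{\ort}$, the compression $A=P_{\cH_+}S|_{\cH_+}$ need \emph{not} be uniformly positive: for $f\in\cM_+$ one has $[Sf,f]=\sum_{i\in I_+}|[f,f_i]|^2-\sum_{i\in I_-}|[f,f_i]|^2$, and the second sum is not controlled by \eqref{eq J frame bounds} because $\cM_+$ and $\cM_-$ need not be $\Skindef$-orthogonal. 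Concretely, in $\C^2$ with fundamental symmetry $\diag(1,-1)$ take $f_1=(1,0)$ and $f_2=(a,b)$ with $|b|>|a|>1$; this is a $J$-frame, yet $[Sf_1,f_1]=1-|a|^2<0$. Hence $K:=-A^{-1}B$ is not available for this choice of decomposition, and the chain ``$A$ uniformly positive $\Rightarrow$ $K$ well defined $\Rightarrow$ $\|K\|<1$'' collapses.

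What the decomposition $\cH=\cM_+\,[\Pluspunkt]\,\cM_+^{\ort}$ actually yields is representation \eqref{Corky}, i.e.\ statement (iii): for $f\in\cM_+^{\ort}$ one has $Sf=-\sum_{i\in I_-}[f,f_i]f_i\in\cM_-$, and since $\cM_-$ is the graph of an angular operator over $\cM_+^{\ort}$, the off-diagonal entries come out as $LD'$ and $-D'L^*$ with the \emph{lower right} corner $D'$ uniformly positive, while only $A'+LD'L^*$ (not $A'$) is uniformly positive. To obtain \eqref{Cork} you must instead take $\cH_-:=\cM_-$ and $\cH_+:=\cM_-^{\ort}$: then $S$ maps $\cM_-^{\ort}$ into $\cM_+$, which is the graph of the angular operator $K$ of $\cM_+$ with respect to this decomposition, and the compression $A$ \emph{is} uniformly positive because the $\Skindef$-orthogonal projection of $\cM_-^{\ort}$ onto $\cM_+$ is an isomorphism and $\cF_+$ is a frame for $(\cM_+,\Skindef)$; your Schur factorization then applies verbatim. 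With this swap (and the corresponding one for (iii)), and provided you justify the maximality of the uniformly definite subspaces $\cM_\pm$ --- which requires the direct-sum decomposition $\cH=\cM_+\dotplus\cM_-$ from \cite{GMMM12}, not merely the density of $\cM_++\cM_-$ --- the remainder of your outline, including the converse direction, is sound.
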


\medskip

The representations for the $J$-frame operator given in Theorem~\ref{Dublin}
were used to show that the $J$-frame bounds for $\cF$ are related to the boundary
of the spectrum of the uniformly positive operators $D+K^*AK$ and $A'+LD'L^*$.
More precisely, \cite[Proposition~4.1]{GLLMMT} says that if $S$ is
represented as in \eqref{Cork}, then
\begin{equation}\label{cotasZ}
  \alpha_- = \min\,\sigma(D + K^*AK)
  \qquad\text{and}\qquad
  \beta_- = \max\,\sigma(D + K^*AK).
\end{equation}
On the other hand, if $S$ is represented as in \eqref{Corky}, then
\begin{equation}\label{cotasZ'}
  \alpha_+ = \min\,\sigma(A'+LD'L^*)
  \qquad\text{and}\qquad
  \beta_+ = \max\,\sigma(A'+LD'L^*).
\end{equation}

Given a $J$-frame $\cF=\{f_i\}_{i\in I}$ for $\cH$ 
with $J$-frame operator $S$, the \emph{canonical dual $J$-frame} of $\cF$
is defined as $\cF'=\{S^{-1}f_i\}_{i\in I}$.
It is also a $J$-frame for $\cH$ such that $\cF'_{\pm}=\{S^{-1}f_i\}_{i\in I_{\pm}}$
are frames for $(\mathcal{M}_{\mp}^{\ort},\pm \Skindef)$,
i.e.\ there exist constants $0<\gamma_\pm\le\delta_\pm$ such that
\[
  \gamma_\pm (\pm[f,f]) \leq \sum_{i\in I_\pm} \big|[f,S^{-1}f_i]\big|^2
  \leq \delta_\pm (\pm[f,f]) \qquad \text{for every $f\in \cM_\mp^{\ort}$}.
\]
The $J$-frame bounds of $\cF'$ are also related to the representations in Theorem~\ref{Dublin}:
if $S$ is represented as in \eqref{Corky} then
\begin{equation}\label{cotasD}
  \gamma_- = \min\,\sigma\bigl((D')^{-1}\bigr) = \bigl(\max\,\sigma(D')\bigr)^{-1}
  \quad\text{and}\quad
  \delta_- = \max\,\sigma\bigl((D')^{-1}\bigr) = \bigl(\min\,\sigma(D')\bigr)^{-1},
\end{equation}
and if $S$ is represented as in \eqref{Cork} then
\begin{equation}\label{cotasA}
  \gamma_+ = \min\,\sigma(A^{-1}) = \bigl(\max\,\sigma(A)\bigr)^{-1}
  \quad\text{and}\quad
  \delta_+ = \max\,\sigma(A^{-1}) = \bigl(\min\,\sigma(A)\bigr)^{-1};
\end{equation}
see \cite[Proposition~4.2]{GLLMMT}.

The following theorem gives an enclosure for the non-real spectrum of
the $J$-frame operator $S$ of a $J$-frame $\cF$ in terms of the $J$-frame bounds
associated with $\cF$ and its canonical dual $J$-frame $\cF'$.

\begin{thm}\label{Lemmens}
Let $\cF$ be a $J$-frame for $(\cH,\Skindef)$ with $J$-frame operator $S$. Then,
\begin{equation}\label{Saveedra12}
  \sigma(S)\setminus\R\,\subseteq\,\left(\bigcup_{a\in [\delta_+^{-1},\gamma_+^{-1}]}
  \mathcal{B}_{a\|K\|}(a)\right)
  \,\cap\,
  \left(\bigcup_{b\in [\alpha_-,\beta_-]} \mathcal{B}_{\tfrac{b\|K\|}{1-\|K\|^2}}\left(\tfrac{b}{1-\|K\|^2}\right)\right),
\end{equation}
where $K$ is the angular operator appearing in \eqref{Cork}.
Also,
\begin{equation}\label{Saveedra13}
  \sigma(S)\setminus\R\,\subseteq\,\left(\bigcup_{d\in [\delta_-^{-1},\gamma_-^{-1}]} \mathcal{B}_{d\|L\|}(d)\right)
  \,\cap\,
  \left(\bigcup_{b\in [\alpha_+,\beta_+]} \mathcal{B}_{\tfrac{b\|L\|}{1-\|L\|^2}}\left(\tfrac{b}{1-\|L\|^2}\right)\right),
\end{equation}
where $L$ is the angular operator appearing in \eqref{Corky}.
The sets on the right-hand sides of \eqref{Saveedra12} and \eqref{Saveedra13} are contained
in sectors of the form $\{z\in\C_+:|\Im z| \le \tan\varphi\cdot \Re z\}$
with half opening angles $\varphi=\arcsin\|K\|$ and $\varphi=\arcsin\|L\|$, respectively.
\end{thm}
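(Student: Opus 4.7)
The plan is to derive both \eqref{Saveedra12} and \eqref{Saveedra13} from Corollary~\ref{thm:gershgorin}, applied separately to the two representations of $S$ provided by Theorem~\ref{Dublin}. Within each representation, one of the two unions comes from a direct application of Corollary~\ref{thm:gershgorin} with $f(t)=|t|^{-1}$ relative to the uniformly positive diagonal block; the other union requires a short Schur-complement computation that brings the auxiliary uniformly positive operator $\tilde D\defeq D+K^*AK$ (resp.\ $\tilde A'\defeq A'+LD'L^*$) into the diagonal. The sector containment will then follow by an elementary geometric remark.

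For the first union in \eqref{Saveedra12}, I would apply Corollary~\ref{thm:gershgorin} to \eqref{Cork}: since $A$ is uniformly positive one has $0\in\rho(A)$, and $A^{-1}B=A^{-1}(-AK)=-K$ yields $\|A^{-1}B\|=\|K\|$; using $\sigma_B(A)\subseteq\sigma(A)\subseteq[\delta_+^{-1},\gamma_+^{-1}]$ (from \eqref{cotasA}) together with $a>0$, this gives the enclosure $\bigcup_{a\in[\delta_+^{-1},\gamma_+^{-1}]}\mathcal{B}_{a\|K\|}(a)$. The analogous application to \eqref{Corky}, using $(D')^{-1}B^*=L^*$ and $\sigma(D')\subseteq[\delta_-^{-1},\gamma_-^{-1}]$ from \eqref{cotasD}, produces the first union in \eqref{Saveedra13}.

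For the second union in \eqref{Saveedra12}, I would first exploit $A(A-\la)^{-1}=I+\la(A-\la)^{-1}$ to rewrite the second Schur complement of the block operator matrix \eqref{Cork} as
\[
  S_2(\la)\;=\;\tilde D-\la+\la K^*A(A-\la)^{-1}K,\qquad \la\in\C\setminus\R.
\]
By Lemma~\ref{l:schurli}, $\la\in\sigma(S)$ iff $0\in\sigma(S_2(\la))$; and $S_2(\la)$ is a bounded perturbation of the self-adjoint operator $\tilde D$, so Lemma~\ref{l:kato} gives $0\in\ol{W(S_2(\la))}$. Taking unit vectors $h_n\in\calH_-$ with $(S_2(\la)h_n,h_n)\to 0$, extracting convergent subsequences, and setting $\mu\defeq\lim(\tilde Dh_n,h_n)\in[\alpha_-,\beta_-]$ (by \eqref{cotasZ}) and $\xi\defeq\lim(A(A-\la)^{-1}Kh_n,Kh_n)$, one arrives at the identity $\mu=\la(1-\xi)$ and hence $\xi=1-\mu/\la$. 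A direct algebraic computation yields
\[
  \Bigl|\la-\tfrac{\mu}{1-\|K\|^2}\Bigr|^{2}
  \;=\;\Bigl(\tfrac{\mu\|K\|}{1-\|K\|^2}\Bigr)^{2}
  \;-\;\tfrac{|\la|^{2}}{1-\|K\|^2}\bigl(\|K\|^{2}-|\xi|^{2}\bigr),
\]
so the desired ball inclusion is equivalent to the sharp bound $|\xi|\le\|K\|$. The parallel argument for the second union in \eqref{Saveedra13} uses the first Schur complement of \eqref{Corky}, which the same manipulation rewrites as $S_1(\la)=\tilde A'-\la+\la LD'(D'-\la)^{-1}L^*$, together with \eqref{cotasZ'}.

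Finally, each ball $\mathcal{B}_{c\|K\|}(c)$ with $c>0$ has radius-to-centre ratio $\|K\|$, so its tangent rays from the origin make angle $\arcsin\|K\|$ with the positive real axis; the same holds for the balls in the second family, whose centres $b/(1-\|K\|^2)$ are positive and whose radius-to-centre ratio is again $\|K\|$. Thus both unions, and therefore their intersection, lie in the double sector with half-opening $\arcsin\|K\|$, and the analogous statement for $\|L\|$ handles \eqref{Saveedra13}. The main obstacle will be the sharp estimate $|\xi|\le\|K\|$: the naive pointwise bound $|(A(A-\la)^{-1}Kh,Kh)|\le\|A(A-\la)^{-1}\|\,\|K\|^{2}$ can exceed $\|K\|$ when $\la$ is close to $\sigma(A)$, so the argument must exploit the real-valuedness of $\mu$ (which forces $\Im(\la\xi_n)\to\Im\la$) in combination with $\|Kh_n\|\le\|K\|$ in order to obtain the sharp inequality.
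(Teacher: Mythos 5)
Your route genuinely differs from the paper's at the decisive point. For the second union in \eqref{Saveedra12} the paper never touches the Schur complements of $S$: it uses that $S^{-1}$ is again a $J$-frame operator with the explicit block form $\smallbmat{A^{-1}-KZK^*}{KZ}{-ZK^*}{Z}$, where $Z=(D+K^*AK)^{-1}$, applies Corollary~\ref{thm:gershgorin} (in its $D$-version) to $S^{-1}$ using $\|Z^{-1}(KZ)^*\|=\|K\|$ and $\sigma(Z)\subseteq[\beta_-^{-1},\alpha_-^{-1}]$, and then transfers the resulting balls through the map $\la\mapsto\la^{-1}$ via the equivalence $\la^{-1}\in\mathcal{B}_{\|K\|/b}(1/b)$ if and only if $\la\in\mathcal{B}_{b\|K\|/(1-\|K\|^2)}(b/(1-\|K\|^2))$. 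Your treatment of the first unions and of the sector statement coincides with the paper's.

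The genuine gap is the step you yourself label ``the main obstacle'': your algebraic identity is correct, so the second ball inclusion is \emph{equivalent} to $|\xi|\le\|K\|$, and you do not prove that bound --- the proof is therefore incomplete at its only nontrivial point. The gap is fillable along the lines you hint at. Writing $\xi_n=(A(A-\la)^{-1}Kh_n,Kh_n)$ and using the spectral theorem for $A$, one computes $\Im(\la\xi_n)=\Im\la\cdot\|A(A-\la)^{-1}Kh_n\|^2$; since $(S_2(\la)h_n,h_n)\to0$ and $((D+K^*AK)h_n,h_n)$ is real, taking imaginary parts in $((D+K^*AK)h_n,h_n)-\la+\la\xi_n\to0$ forces $\|A(A-\la)^{-1}Kh_n\|\to1$, and then Cauchy--Schwarz gives $|\xi_n|\le\|A(A-\la)^{-1}Kh_n\|\,\|Kh_n\|$, whose right-hand side converges to a limit not exceeding $\|K\|$. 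With this inserted your argument goes through, and in fact yields the slightly sharper pointwise conclusion that $\la$ lies in the ball attached to the specific value $\mu=\lim\,((D+K^*AK)h_n,h_n)$; without it, the proof does not stand. A minor further point: the balls lie in the open right half-plane because their centres are positive and $\|K\|<1$, which is what upgrades your ``double sector'' to the sector in $\C_+$ asserted in the theorem.
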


\begin{proof}
Let $\sigma_B(A)$ be defined as before Proposition \ref{p:late}. Obviously
we have $\sigma_B(A)\subseteq \sigma(A)\subseteq [a_-,a_+]$, where the constants
$a_\pm$ are defined in \eqref{defconstapm}.
Applying Corollary \ref{thm:gershgorin} to $S$ represented as in \eqref{Cork} we obtain that
\begin{equation}\label{JF1}
  \sigma(S)\setminus\R\,\subseteq\,\bigcup_{a\in [a_-,a_+]} \mathcal{B}_{a\|K\|}(a).
\end{equation}
Moreover, according to \eqref{cotasA} we have that $a_-=\delta_+^{-1}$ and $a_+=\gamma_+^{-1}$.
On the other hand, $S^{-1}$ is also a $J$-frame operator and it is easy to check that
\[
  S^{-1} = \mymatrix{
    A^{-1} - KZK^* & KZ \\[0.5ex]
    -ZK^* & Z },
\]
where $Z \defeq (D + K^*AK)^{-1}$ is a uniformly positive operator; cf.\ Theorem \ref{Dublin}.
Therefore Corollary~\ref{thm:gershgorin} applied to $S^{-1}$ represented as above implies
\[
  \sigma(S^{-1})\setminus\R\,\subseteq\,
  \bigcup_{r\in [r_-,r_+]}\mathcal{B}_{r\|K^*\|}\left(r\right),
\]
where $[r_-,r_+]$ is the closure of the numerical range of $Z$.  Also, \eqref{cotasZ} says that $r_-=\beta_-^{-1}$
and $r_+=\alpha_-^{-1}$.  With $b\defeq\tfrac{1}{r}$ it follows that
\[
  \sigma(S^{-1})\setminus\R\,\subseteq\,
  \bigcup_{b\in [\alpha_-,\beta_-]} \mathcal{B}_{\tfrac{\|K\|}{b}}\left(\tfrac{1}{b}\right).
\]
Recall that $\la\in\sigma(S)\setminus\{0\}$ if and only
if $\tfrac{1}{\la}\in\sigma(S^{-1})\setminus\{0\}$. Moreover,
observe that for $r>0$ 
\[
  \frac{1}{\lambda} \in \mathcal{B}_{\tfrac{\|K\|}{r}}\left(\tfrac{1}{r}\right)
  \quad \text{if and only if} \quad \lambda \in
  \mathcal{B}_{\tfrac{r\|K\|}{1-\|K\|^2}}\left(\tfrac{r}{1-\|K\|^2}\right).
\]
Therefore,
\begin{equation}\label{JF2}
  \sigma(S)\setminus\R\,\subseteq\,
  \bigcup_{b\in [\alpha_-,\beta_-]} \mathcal{B}_{\tfrac{b\|K\|}{1-\|K\|^2}}\left(\tfrac{b}{1-\|K\|^2}\right),
\end{equation}
and \eqref{Saveedra12} follows by intersecting \eqref{JF1} and \eqref{JF2}.

The proof of \eqref{Saveedra13} is similar.  It follows from Corollary~\ref{thm:gershgorin}
applied to $S$ represented as in \eqref{Corky}, and also to $S^{-1}$ represented as
\[
  S^{-1} = \mymatrix{
    Z'  & -Z'L \\[0.5ex]
    L^* Z' & (D')^{-1}-L^*Z'L },
\]
with $Z' = (A'+LD'L^*)^{-1}$. 

The statement about the sectors is clear from Corollary~\ref{thm:gershgorin}.
\end{proof}

In the following, we compare Theorem \ref{Lemmens} with the enclosure for
the non-real spectrum of $J$-frame operators obtained in \cite{GLLMMT}.

Let $\cF$ be a $J$-frame for a Krein space $(\cH,\Skindef)$
with $J$-frame operator $S$ and $J$-frame bounds $0<\alpha_\pm\leq\beta_\pm$.
Assume also that $0<\gamma_\pm\leq\delta_\pm$ are the $J$-frame bounds of its
canonical dual $J$-frame $\cF'$. 
In \cite[Corollary~5.3]{GLLMMT} it was shown that
\begin{equation}\label{real2}
  \sigma(S)\setminus\R\,\subseteq\, \mathring{
  \mathcal{B}}_{\min \{\gamma_+^{-1}, \gamma_-^{-1}\}}\bigl(\min \{\gamma_+^{-1}, \gamma_-^{-1}\}\bigr)
  \cap\left\{\la \in\C : \Re \la \ge\frac{\max\{\alpha_+,\alpha_-\}}{2}\right\}.
\end{equation}
Here, $\mathring{\mathcal{B}}_r(a)$ denotes the interior of $\mathcal{B}_r(a)$.
Let us show that the intersection of the sets on the right-hand sides
of \eqref{Saveedra12} and \eqref{Saveedra13}
are (strictly) contained in the right-hand side of \eqref{real2}.

\medskip
For every $a\in [\delta_+^{-1},\gamma_+^{-1}]$ it is easy to see
that $\mathcal{B}_{a\|K\|}(a)$ is strictly contained in $\mathring{\mathcal{B}}_{\gamma_+^{-1}}(\gamma_+^{-1})$.
Therefore,
\[
  \bigcup_{a\in [\delta_+^{-1},\gamma_+^{-1}]}\mathcal{B}_{a\|K\|}(a)\subseteq
  \mathring{\mathcal{B}}_{\gamma_+^{-1}}(\gamma_+^{-1}).
\]
On the other hand, given $r>0$, if $\la\in \mathcal{B}_{\tfrac{r\|K\|}{1-\|K\|^2}}\left(\tfrac{r}{1-\|K\|^2}\right)$,
then $\Re\la \geq \tfrac{r}{1+\|K\|}>\tfrac{r}{2}$.  Thus,
\[
  \bigcup_{b\in [\alpha_-,\beta_-]} \mathcal{B}_{\tfrac{b\|K\|}{1-\|K\|^2}}\left(\tfrac{b}{1-\|K\|^2}\right)
  \subseteq \left\{\la \in\C : \Re \la \ge\frac{\alpha_-}{2}\right\}.
\]
Similarly, it is easy to see that
\[
  \bigcup_{d\in [\delta_-^{-1},\gamma_-^{-1}]} \mathcal{B}_{d\|L\|}(d) \subseteq
  \mathring{\mathcal{B}}_{\gamma_-^{-1}}(\gamma_-^{-1}),
\]
and
\[
  \left(\bigcup_{b\in [\alpha_+,\beta_+]} \mathcal{B}_{\tfrac{b\|L\|}{1-\|L\|^2}}\left(\tfrac{b}{1-\|L\|^2}\right)\right)
  \subseteq \left\{\la \in\C : \Re \la \ge\frac{\alpha_+}{2}\right\}.
\]
Hence, Theorem~\ref{Lemmens} improves the enclosure \eqref{real2} for the non-real spectrum
of the $J$-frame operator $S$ obtained in \cite{GLLMMT}.

\section*{Acknowledgements}

\noindent F.\ Mart\'{\i}nez Per\'{\i}a and C.\ Trunk  gratefully acknowledge the support of the DFG
(Deutsche Forschungsgemeinschaft)  from the project TR 903/21-1.
 In addition, J.\ I.\ Giribet and F.\ Mart\'{\i}nez Per\'{\i}a gratefully acknowledges the support from the grant  PIP CONICET 0168.


\end{document}